\newcommand{\RR}{\mathbb R}
\newcommand{\OO}{\mathcal O}
\newcommand{\parens}[1]{\left( #1 \right)}
\newcommand{\partialf}[2]{\frac{\partial #1}{\partial #2}}
\begin{document}
\date{\today}
\title{A spectral element method for meshes with skinny elements}
\author{Aaron Yeiser\\ MIT, Cambridge, MA 02139, US (\MakeLowercase{\texttt{ayeiser@mit.edu}}) \\[.2cm] Advisor: Alex Townsend \\ Department of Mathematics, Cornell University, Ithaca, NY 14853.}
\maketitle

\begin{abstract}
  When numerically solving partial differential equations (PDEs), the first step is often to discretize the geometry using a mesh and to solve a corresponding discretization of the PDE.
  Standard finite and spectral element methods require that the underlying mesh has no skinny elements for numerical stability.
  Here, we develop a novel spectral element method that is numerically stable on meshes that contain skinny elements, while also allowing for high degree polynomials on each element.
  Our method is particularly useful for PDEs for which anisotropic mesh elements are beneficial and we demonstrate it with a Navier--Stokes simulation.
  Code for our method can be found at \url{https://github.com/ay2718/spectral-pde-solver}.
\end{abstract}

\begin{keywords}
spectral element method, skinny elements, ultraspherical spectral method
\end{keywords}

\begin{AMS}
15A03, 26C15
\end{AMS}

\section{Introduction}\label{sec:Intro}
A central idea for numerically solving partial differential equations (PDEs) on complex geometries is to represent the domain by a mesh, i.e., a collection of nonoverlapping triangles, quadrilaterals, or polygons. The mesh is then used by a spectral element method to represent the solution on each element by a high degree polynomial~\cite{Patera_84_01,Solin_03_01}. Engineering experience suggests that meshes containing skinny or skewed elements are undesirable and should be avoided, if possible~\cite{Shewchuk_02_01}. However, from a mathematical point-of-view, long and thin elements can better resolve anisotropic solutions~\cite{Shewchuk_02_01} that appear in computational fluid dynamics and problems involving boundary layers.

In this paper, we develop a spectral element method that is numerically stable on meshes containing elements with large aspect ratios and also allows for high degree polynomials on each element. In particular, the numerical stability of our method is observed to be independent of the properties of the mesh and polynomial degrees used. This has several computational advantages:  (1) A practical $hp$-refinement strategy does not need to take into consideration the quality of the final mesh, (2) Anisotropic solutions can be represented with fewer degrees of freedom, and (3) Low-quality meshes can be used without concern for numerical stability.

Over the last two decades, there has been considerable research on mesh generation algorithms that avoid unnecessarily skinny elements~\cite{Ruppert_95_01,Shewchuk_02_02}. While these mesh generation algorithms are robust and effective, they can be the dominating computational cost of numerically solving a PDE. By allowing low-quality meshes, our method alleviates this current computational burden on mesh generation algorithms. This is especially beneficial for time-dependent PDE in which an adaptive mesh is desired, for example, to resolve the propagation of a moving wave front.

There are a handful of mesh generation algorithms that have been recently designed for anisotropic solutions that generate meshes with skinny elements~\cite{Schoen_08_01}. It is therefore timely to have a general-purpose element method that allows for an accurate solution on such meshes. For time-dependent
PDEs, such as the Navier--Stokes equations, skinny elements can relax the Courant--Friedrichs--Lewy time-step restrictions, leading to significant computational savings in long-time incompressible fluid simulations.

A standard element method constructs a global discretization matrix with a potentially large condition number when given a mesh containing skinny elements. This is because the global discretization matrix is assembled element-by-element from a local discretization of the PDE on each element. Since the elements may have disparate areas, the
resulting discretization matrix can be poorly scaled. One can improve the condition number of the global discretization matrix by scaling the local discretization matrices by the element's area.  Since we are using a spectral element method, we can go a little further and scale the local discretization matrices by the Jacobian of a bilinear transformation that maps the element to a reference domain (see Section~\ref{sec:Ultraspherical}).

The solutions of uniformly elliptic PDEs with Dirichlet boundary conditions defined on a convex skinny element are not sensitive to perturbations in the Dirichlet data or forcing term.
Since we know it is theoretically possible to solve elliptic PDEs on meshes with skinny elements, it is particularly disappointing to employ a numerically unstable method (see Section~\ref{sec:CondNumber}).

The structure of the paper is as follows:
In Section~\ref{sec:CondNumber}, we define some useful terminology and provide the theory to show that certain elliptic PDEs on skinny elements are well-conditioned.
Section~\ref{sec:Ultraspherical} describes the process of using our spectral element method on a single square element, and Section~\ref{sec:Determinant} extends that process to convex quadrilateral domains.
In Section~\ref{sec:Interface}, we define a method for joining together two quadrilateral domains along an edge, and Section~\ref{sec:ElementMethod} extends that method to joining an entire quadrilateral mesh.
Key optimizations to our method are described in Section~\ref{sec:Optimization}.
Finally we present a Navier--Stokes solver using our spectral element method in Section~\ref{sec:NavierStokes}.

\section{Uniformly elliptic PDEs on skinny elements}\label{sec:CondNumber}
It is helpful to review some background material on skinniness of elements and the condition number of PDEs on skinny elements.
Given an element $\mathcal{T}$ of any convex 2D shape, let $r_{\text{in}}$ denote the {\em inradius}, i.e., the radius of largest circle that can be inscribed in $\mathcal{T}$, and
$r_{\text{out}}$ denote the {\em min-containment radius}, i.e.,  the radius of smallest circle that contains $\mathcal{T}$. We define the {\em skinniness} of $\mathcal{T}$
to be
\[
s\left( \mathcal{T}\right) =  \frac{r_{\text{in}}}{r_{\text{out}}}.
\]
For finite element methods, it is widely considered that a skinny triangle with a large angle, i.e., $\theta \approx \pi$, is more detrimental to numerical stability than those triangles with only acute angles. This is based on the theoretical results on interpolation and discretization errors in~\cite{Babuska_76_01}.  The spectral element method that we develop is numerically stable on meshes containing any type of skinny quadrilateral.  Therefore, it is also numerically stable on meshes containing any kind of skinny triangle, because a triangle can be constructed out of three non-degenerate quadrilaterals by dissecting along the medians, as in Figure \ref{fig:ManyQuadError}.

\subsection{Apriori bounds}

Consider the second-order elliptic differential operator operating on domain $\Omega$.
\[
\mathcal{L} u = \sum_{i,j=1}^d a_{ij}(x) \frac{\partial^2 u}{\partial x_i\partial x_j} + \sum_{i=1}^d b_i(x)\frac{\partial u}{\partial x_i}+ c(x) u,
\]
where the coefficients $a_{ij}$, $b_i$, and $c$ are continuous functions on $\Omega$. We say that $\mathcal{L}$ is uniformly elliptic if for some constant $\theta_0 > 0$,
\[
\sum_{i,j=1}^d a_{ij}(x) \xi_i\xi_j \geq \theta_0 \sum_{i=1}^d\xi_i^2,  \qquad x\in\Omega, \qquad \xi \in \RR^d.
\]

\begin{lemma}
\label{poisson}
Consider the uniformly elliptic partial differential operator
$$\mathcal{L} u = \sum_{i, j = 1}^n a_{ij} u_{x_i x_j} + \sum_{i = 1}^n b_i u_{x_i},$$
with coefficients $a_{ij}$ and $b_i$ continuous.  Let $\mathcal{L} u = f$ on domain $\Omega$ with boundary $\partial \Omega$.
If $f \geq 0$ on $\Omega$, then the maximum value of $u$ occurs on $\partial \Omega$.
If $f \leq 0$ on $\Omega$, then the minimum value of $u$ occurs on $\partial \Omega$.
\end{lemma}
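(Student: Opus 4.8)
The plan is to prove the classical weak maximum principle by an interior-extremum argument combined with a barrier. Throughout I assume $u \in C^2(\Omega) \cap C(\bar{\Omega})$ and that $\Omega$ is bounded, so that the continuous coefficients $a_{ij}, b_i$ are bounded on $\bar{\Omega}$ and $u$ attains its extrema on the compact set $\bar{\Omega}$; since $u_{x_i x_j} = u_{x_j x_i}$ I may also assume without loss of generality that $a_{ij} = a_{ji}$. For the application in this paper $\Omega$ is a single bounded convex mesh element, so these hypotheses hold automatically.

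First I would treat the strict case: if $\mathcal{L} u > 0$ on $\Omega$, then $u$ has no interior maximum. Indeed, at an interior maximizer $x_0$ one has $\nabla u(x_0) = 0$ and the Hessian $H := D^2 u(x_0)$ is negative semidefinite, so $\sum_i b_i(x_0) u_{x_i}(x_0) = 0$ and $\sum_{i,j} a_{ij}(x_0) u_{x_i x_j}(x_0) = \operatorname{tr}\!\big(A(x_0) H\big)$, where $A(x_0) = (a_{ij}(x_0))$ is symmetric and positive definite by uniform ellipticity ($A(x_0) \succeq \theta_0 I$). The one linear-algebra fact I need is that $\operatorname{tr}(AH) \le 0$ whenever $A \succeq 0$ and $H \preceq 0$: diagonalizing $A = \sum_k \lambda_k v_k v_k^\top$ with $\lambda_k \ge 0$ gives $\operatorname{tr}(AH) = \sum_k \lambda_k\, v_k^\top H v_k \le 0$. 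Hence $\mathcal{L} u(x_0) \le 0$, contradicting $\mathcal{L} u(x_0) > 0$, so the maximum of $u$ over $\bar{\Omega}$ is attained on $\partial\Omega$. Next I would remove the strictness with the barrier $w(x) = e^{\gamma x_1}$, for which $\mathcal{L} w = \big(\gamma^2 a_{11}(x) + \gamma b_1(x)\big) e^{\gamma x_1}$; since $a_{11} \ge \theta_0 > 0$ and $b_1$ is bounded on $\bar{\Omega}$, choosing $\gamma$ large enough forces $\mathcal{L} w > 0$ on $\Omega$. Given $f \ge 0$, for each $\epsilon > 0$ the function $u_\epsilon := u + \epsilon w$ satisfies $\mathcal{L} u_\epsilon = f + \epsilon \mathcal{L} w > 0$, so by the strict case $\max_{\bar{\Omega}} u_\epsilon = \max_{\partial\Omega} u_\epsilon$. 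With $M := \max_{\bar{\Omega}} w$ and $u \le u_\epsilon$ this yields $\max_{\bar{\Omega}} u \le \max_{\partial\Omega} u_\epsilon \le \max_{\partial\Omega} u + \epsilon M$, and letting $\epsilon \to 0^+$ gives $\max_{\bar{\Omega}} u \le \max_{\partial\Omega} u$, hence equality. Finally, the case $f \le 0$ follows immediately by applying the result to $-u$, which satisfies $\mathcal{L}(-u) = -f \ge 0$, so the maximum of $-u$—equivalently the minimum of $u$—occurs on $\partial\Omega$.

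The routine parts are the contradiction at an interior maximum and the $\epsilon \to 0$ limit; the only point requiring care is bookkeeping of the standing assumptions (the $C^2(\Omega)\cap C(\bar{\Omega})$ regularity of $u$ and the boundedness of $\Omega$, which makes the coefficients bounded so that $\gamma$ can be chosen uniformly). I do not expect any genuine obstacle beyond making those hypotheses explicit.
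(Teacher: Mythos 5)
Your proof is correct and is essentially the argument the paper is pointing to: the paper's ``proof'' of this lemma is just a citation to the weak maximum principle in Evans, and your interior-Hessian contradiction for the strict case $\mathcal{L}u>0$, followed by the exponential barrier $e^{\gamma x_1}$ to remove the strictness, is precisely the standard proof given there. The only caveat worth recording is the one you already flag yourself: the statement as written omits the hypotheses $u\in C^2(\Omega)\cap C(\bar{\Omega})$ and $\Omega$ bounded, which your argument (and the cited theorem) genuinely needs.
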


\begin{proof}
For a proof, see Theorem 1 in Section 6.5.1 of \cite{evans}.
\end{proof}

\begin{lemma}
\label{screened}
Consider the uniformly elliptic partial differential operator
$$\mathcal{L} u = \sum_{i, j = 1}^n a_{ij} u_{x_i x_j} + \sum_{i = 1}^n b_i u_{x_i} + cu,$$
with coefficients $a_{ij}$, $b_i$, and $c$ continuous.  Let $\mathcal{L} u = f$ on domain $\Omega$ with boundary $\partial \Omega$, and let $c < 0$ on $\Omega$.
If $f \geq 0$ on $\Omega$, then the maximum positive value of $u$ occurs on $\partial \Omega$.
If $f \leq 0$ on $\Omega$, then the minimum negative value of $u$ occurs on $\partial \Omega$.
\end{lemma}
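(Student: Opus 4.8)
The plan is to run the classical weak maximum principle argument, exploiting the fact that the strict sign condition $c<0$ removes the need for any auxiliary comparison function (in contrast to Lemma~\ref{poisson}, where an exponential perturbation is essentially unavoidable). Throughout I assume, as is implicit in the statement, that $\Omega$ is bounded and $u\in C^2(\Omega)\cap C(\overline\Omega)$, so that $u$ attains its maximum over the compact set $\overline\Omega$. It suffices to treat the case $f\ge 0$: if instead $f\le 0$, then $-u$ satisfies $\mathcal{L}(-u)=-f\ge 0$ with the same (still strictly negative) zeroth-order coefficient, and the minimum negative value of $u$ is the negative of the maximum positive value of $-u$.

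So suppose $f\ge 0$ and, for contradiction, that $M:=\max_{\overline\Omega}u>0$ is attained at an interior point $x_0\in\Omega$. At such a point the first-order conditions give $u_{x_i}(x_0)=0$ for every $i$, and the second-order conditions give that the Hessian $D^2u(x_0)$ is negative semidefinite. Since $u_{x_ix_j}=u_{x_jx_i}$, we may replace $(a_{ij})$ by its symmetric part without changing $\sum_{i,j}a_{ij}u_{x_ix_j}$, and uniform ellipticity makes this symmetric matrix positive definite at $x_0$. The one small linear-algebra fact needed is that the trace of a product of a positive semidefinite and a negative semidefinite matrix is $\le 0$: diagonalizing $A:=(a_{ij}(x_0))_{\mathrm{sym}}=\sum_k\lambda_k v_kv_k^\top$ with $\lambda_k\ge 0$ gives $\sum_{i,j}a_{ij}(x_0)u_{x_ix_j}(x_0)=\sum_k\lambda_k\,v_k^\top D^2u(x_0)\,v_k\le 0$. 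Hence
\[
\mathcal{L}u(x_0)=\underbrace{\sum_{i,j}a_{ij}(x_0)u_{x_ix_j}(x_0)}_{\le 0}+\underbrace{\sum_i b_i(x_0)u_{x_i}(x_0)}_{=0}+\underbrace{c(x_0)\,u(x_0)}_{<0}<0,
\]
the last term being negative because $c(x_0)<0$ and $u(x_0)=M>0$. This contradicts $\mathcal{L}u(x_0)=f(x_0)\ge 0$. Therefore no positive maximum can occur in the interior, and since the maximum over $\overline\Omega$ is attained somewhere, it is attained on $\partial\Omega$.

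I do not expect a genuine obstacle: the only points requiring care are the reduction to a symmetric $(a_{ij})$ together with the trace inequality, the mild boundedness/regularity assumptions needed for the maximum to be attained, and — this being the whole point of the screened case — the observation that $c<0$ upgrades the inequality $\mathcal{L}u(x_0)\le 0$ of the unscreened argument to the \emph{strict} inequality $\mathcal{L}u(x_0)<0$, which already contradicts $f\ge 0$ without passing to a perturbed function $u+\varepsilon e^{\lambda x_1}$ and taking a limit as one must for Lemma~\ref{poisson}. Alternatively, one could simply cite the weak maximum principle for uniformly elliptic operators with nonpositive zeroth-order term, e.g.\ Theorem~2 in Section~6.4.1 of~\cite{evans}.
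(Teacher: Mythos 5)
Your proof is correct, but it takes a different route from the paper: the paper's entire proof of Lemma~\ref{screened} is a citation to Theorem~2 in Section~6.5.1 of~\cite{evans} (in the standard edition this weak maximum principle actually appears in Section~6.4.1, so the alternative reference at the end of your write-up is arguably the more accurate one, modulo Evans's opposite sign convention for the leading term). You instead give the self-contained second-derivative-test argument: at an interior point where a positive maximum is attained, the gradient vanishes, the Hessian is negative semidefinite, the trace inequality for the product of a positive semidefinite and a negative semidefinite matrix kills the second-order term, and the strict condition $c<0$ forces $\mathcal{L}u(x_0)<0$, contradicting $f\ge 0$; the reduction of the $f\le 0$ case to the $f\ge 0$ case via $-u$ is also handled correctly. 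What your version buys is transparency about exactly where $c<0$ enters --- it upgrades $\mathcal{L}u(x_0)\le 0$ to a strict inequality and so dispenses with the exponential perturbation needed for Lemma~\ref{poisson} --- at the cost of covering only the stated hypothesis $c<0$; the cited theorem (and the way the lemma is later invoked in Theorem~\ref{boundary}, where only $c\le 0$ is assumed) holds under the weaker condition $c\le 0$, which your direct argument would need the perturbation-and-limit step to reach.
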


\begin{proof}
For a proof, see Theorem 2 in Section 6.5.1 of \cite{evans}.
\end{proof}

Theorem \ref{boundary} uses Lemmas \ref{poisson} and \ref{screened} to show that perturbations to the boundary conditions of a uniformly elliptic PDE with $c \leq 0$ will not be amplified in solution on the interior of the domain.

\begin{theorem}
\label{boundary}
Let $\mathcal{L} u = \sum_{i, j = 1}^n a_{ij} u_{x_i x_j} + \sum_{i = 1}^n b_i u_{x_i} + cu$, with $c \leq 0$.  Let $u$ be the solution to $\mathcal{L} u = f$ on domain $\Omega$ with boundary $\partial \Omega$, and $u(\partial \Omega) = g$.  If $v$ is the solution to $\mathcal{L} u = f$ with boundary conditions $u(\partial \Omega) = g + \epsilon$, then $\max |v - u| \leq \max |\epsilon|$.
\end{theorem}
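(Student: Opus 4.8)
The plan is to reduce the statement to a homogeneous problem and then invoke the maximum principles of Lemmas~\ref{poisson} and~\ref{screened}. Set $w = v - u$. By linearity of $\mathcal{L}$ we get $\mathcal{L} w = \mathcal{L} v - \mathcal{L} u = f - f = 0$ on $\Omega$, while on the boundary $w = (g + \epsilon) - g = \epsilon$. Writing $M = \max_{\partial\Omega} |\epsilon|$, the claim is equivalent to $-M \le w \le M$ throughout $\Omega$, since that gives $\max_{\overline\Omega} |v - u| = \max_{\overline\Omega} |w| \le M = \max|\epsilon|$. So the whole proof is a one-sided bound plus its mirror image.

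Next I would establish the upper bound $w \le M$. The issue — and the step I expect to be the main obstacle — is that the hypothesis $c \le 0$ is exactly the borderline case where neither lemma applies verbatim: Lemma~\ref{screened} wants $c < 0$ strictly, while Lemma~\ref{poisson} has no zeroth-order term at all, yet $c$ may vanish on part of $\Omega$ without vanishing identically. The fix is to localize. Let $\Omega^+ = \{\, x \in \Omega : w(x) > 0 \,\}$, an open subset of $\Omega$. On $\Omega^+$ the zeroth-order contribution has a convenient sign: $L_0 w := \mathcal{L} w - c w = -c w \ge 0$ there, and $L_0 = \sum a_{ij}\partial_{x_i x_j} + \sum b_i \partial_{x_i}$ is a uniformly elliptic operator with continuous coefficients and no zeroth-order term. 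Applying Lemma~\ref{poisson} to $L_0$ on the domain $\Omega^+$ shows $w$ attains its maximum over $\overline{\Omega^+}$ on $\partial\Omega^+$. A boundary point of $\Omega^+$ either lies on $\partial\Omega$, where $w = \epsilon \le M$, or lies in the interior of $\Omega$, where continuity forces $w = 0$; hence $\max_{\overline{\Omega^+}} w \le \max(0, M) = M$, and off $\Omega^+$ we trivially have $w \le 0 \le M$. Thus $w \le M$ on all of $\Omega$. (When $c < 0$ strictly, one can shortcut this by applying Lemma~\ref{screened} directly to $\tilde w = w - M$, which satisfies $\mathcal{L}\tilde w = -cM \ge 0$ and $\tilde w \le 0$ on $\partial\Omega$; but the localization argument is what is needed for the general $c \le 0$.)

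Finally I would obtain the lower bound $w \ge -M$ by the symmetric argument applied to $-w$, i.e., localizing on $\Omega^- = \{\, x \in \Omega : w(x) < 0 \,\}$ where $L_0(-w) = c w \ge 0$, and using Lemma~\ref{poisson} together with the boundary analysis $\partial\Omega^- \subseteq \partial\Omega \cup \{w = 0\}$ as before. Combining the two bounds yields $|v - u| = |w| \le M = \max|\epsilon|$ on $\Omega$, which is the assertion of Theorem~\ref{boundary}.
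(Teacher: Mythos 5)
Your proposal is correct, and it shares the paper's opening move --- set $w=v-u$, observe that $\mathcal{L}w=0$ on $\Omega$ with $w=\epsilon$ on $\partial\Omega$, and finish with a maximum principle --- but the way you then invoke the maximum principle is genuinely different and, in fact, more careful than the paper's. The paper argues by contradiction directly from Lemma~\ref{screened}: if $\max|w|>\max|\epsilon|$ then $w$ attains an interior nonnegative maximum or nonpositive minimum, and it asserts that $w$ must then be constant. That assertion is the \emph{strong} maximum principle rather than the weak form actually recorded in Lemma~\ref{screened}, and it also glosses over the fact that Lemma~\ref{screened} is stated only for $c<0$ while Theorem~\ref{boundary} assumes $c\le 0$. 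You identify exactly this borderline issue and resolve it by the standard localization device: on $\Omega^+=\{x\in\Omega: w(x)>0\}$ the operator with the zeroth-order term removed satisfies $L_0w=-cw\ge 0$, so Lemma~\ref{poisson} applies there, and $\partial\Omega^+$ consists of points of $\partial\Omega$ (where $w=\epsilon$) together with interior points of $\Omega$ where $w=0$; this gives $w\le\max(0,\max|\epsilon|)$, and the mirror argument on $\Omega^-$ gives the lower bound. What your route buys is a proof that genuinely covers the stated hypothesis $c\le 0$ using only the two lemmas as written; the paper's route is shorter but leans on a strong maximum principle for $c\le 0$ that it never states. One small caveat shared by both arguments: the weak maximum principle requires $\Omega$ to be bounded (and $w$ continuous up to $\overline{\Omega}$), which the theorem statement leaves implicit.
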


\begin{proof}
Let us write $v = u + \delta$.  Since $L$ is a linear operator, we know that $\mathcal{L} v = \mathcal{L} u + \mathcal{L} u \delta = f$.
Therefore, $\mathcal{L} \delta = 0$, with $\delta(\partial \Omega) = \epsilon$.  By Lemma~\ref{screened}, we know that if $\delta$ achieves a nonnegative maximum on the interior of $\Omega$, then $\delta$ is constant.  Similarly, if $\delta$ achieves a nonpositive minimum on the interior of $\Omega$, then $\delta$ is constant.  If $\max |\delta| > \max |\epsilon|$, then $\delta$ will achieve a nonpositive minimum or a nonnegative maximum on the interior of $\Omega$.  However, Lemma~\ref{screened} states that $\delta$ must then be constant, so therefore $\max |\delta| = \max |\epsilon|$, hence a contradiction.
Therefore, $\max |\delta| \leq \max |\epsilon|$.
\end{proof}

Theorem \ref{weakrighthand} provides a weak bound for the effect of perturbations to the right hand side on uniformly elliptic PDEs with $c \leq 0$.

\begin{theorem}
\label{weakrighthand}
Let $\Omega\in\mathbb{R}^d$ be a bounded domain and $\mathcal{L}$ a second-order uniformly elliptic differential operator with $c\leq 0$. Let $u$ be a weak solution of
\[
{\left\lbrace \begin{aligned}
\mathcal{L} u &= f \ \text{ on } \ \Omega,\\
u &= g \  \text{ on } \ \partial\Omega,
\end{aligned} \right. }
\]
and $\hat{u}$ be a weak solution of
\[
{\left\lbrace \begin{aligned}
\mathcal{L} \hat{u} &= f + \Delta f \ \text{ on } \ \Omega,\\
\hat{u} &= g + \Delta g \ \text{ on } \ \partial\Omega,
\end{aligned} \right.}
\]
where $u,\hat{u}\in \mathcal{C}^2(\Omega)\cap \mathcal{C}^0(\partial\Omega)$ and $\partial\Omega$ is the closure of $\Omega$. Then,
\[
\sup_{x\in\Omega} |u(x)-\hat{u}(x)| \leq \sup_{x\in\partial\Omega} |\Delta g(x)| + \left(e^{(\beta+1){\rm diag}(\Omega)}-1\right)\sup_{x\in\Omega} \left|\frac{\Delta f(x)}{\theta(x)}\right|,
\]
where $\beta = \max_{i=1}^d \sup_{x\in\Omega} |b_i(x)/\theta(x)|$.
\end{theorem}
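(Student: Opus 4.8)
The plan is to subtract the two boundary-value problems and reduce to a one-sided maximum-principle estimate for the error, which is then dominated by an explicit exponential barrier. Put $w := \hat u - u$. By linearity of $\mathcal{L}$, $w$ solves $\mathcal{L}w = \Delta f$ in $\Omega$ with $w = \Delta g$ on $\partial\Omega$, and $w\in\mathcal{C}^2(\Omega)\cap\mathcal{C}^0(\overline{\Omega})$. Abbreviate $G := \sup_{x\in\partial\Omega}|\Delta g(x)|$ and $F := \sup_{x\in\Omega}|\Delta f(x)/\theta(x)|$, both finite as is implicit in the statement (here $\theta(x)>0$ is the ellipticity function, which uniform ellipticity bounds below). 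We must show $|w(x)| \leq G + \bigl(e^{(\beta+1)\,\mathrm{diam}(\Omega)}-1\bigr)F$ on $\Omega$, where $\mathrm{diam}(\Omega)$ is the diameter (the quantity written ${\rm diag}(\Omega)$ in the statement).

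First I would normalize coordinates: any set of diameter $d:=\mathrm{diam}(\Omega)$ lies, after a translation, inside the slab $\{x: 0\leq x_1\leq d\}$, so we may assume this containment; a pure translation leaves all coefficients $a_{ij},b_i,c$ — and hence $\theta$ and $\beta$ — unchanged, so no rotation is needed. With $\alpha:=\beta+1$, introduce the barrier
\[
\psi(x) := G + \bigl(e^{\alpha d}-e^{\alpha x_1}\bigr)F, \qquad x\in\overline{\Omega}.
\]
Then $\psi\geq0$ on $\overline{\Omega}$ and $\psi\geq G\geq|\Delta g|=|w|$ on $\partial\Omega$. The heart of the argument is the bound $\mathcal{L}\psi\leq-|\Delta f|$ on $\Omega$: a direct computation gives $\mathcal{L}\psi = -F\bigl(\alpha^2 a_{11}+\alpha b_1\bigr)e^{\alpha x_1} + c\,\psi$, and using $a_{11}(x)\geq\theta(x)$ (uniform ellipticity with $\xi=e_1$), $|b_1(x)|\leq\beta\theta(x)$ (definition of $\beta$), and $e^{\alpha x_1}\geq1$ on $\Omega$,
\[
\bigl(\alpha^2 a_{11}(x)+\alpha b_1(x)\bigr)e^{\alpha x_1} \;\geq\; \alpha(\alpha-\beta)\,\theta(x)\,e^{\alpha x_1} \;=\; (\beta+1)\,\theta(x)\,e^{\alpha x_1} \;\geq\; \theta(x),
\]
so the first term is $\leq -F\theta(x)\leq -|\Delta f(x)|$ pointwise, while $c(x)\psi(x)\leq0$ because $c\leq0$ and $\psi\geq0$.

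Then I would run a comparison. Since $\mathcal{L}(\psi-w)=\mathcal{L}\psi-\Delta f\leq-|\Delta f|-\Delta f\leq0$ in $\Omega$ and $\psi-w\geq|w|-w\geq0$ on $\partial\Omega$, the maximum principle forces $\psi-w\geq0$ throughout $\Omega$; applying the same reasoning to $-w$, which satisfies $\mathcal{L}(-w)=-\Delta f$ and $-w=-\Delta g$ on $\partial\Omega$, gives $\psi+w\geq0$. Hence $|w|\leq\psi$ on $\Omega$, and because $e^{\alpha x_1}\geq1$ there, $|w(x)|\leq G+(e^{\alpha d}-1)F$, which is exactly the claimed estimate.

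The step I expect to need the most care is the invocation of the maximum principle just used. When $c<0$ strictly it follows directly from Lemma~\ref{screened}: if $\psi-w$ were negative somewhere in $\Omega$, then $w-\psi$ would attain a positive value in the interior, whence by Lemma~\ref{screened} (with right-hand side $\mathcal{L}(w-\psi)\geq0$) its positive maximum would lie on $\partial\Omega$, contradicting $w-\psi=\Delta g-\psi\leq0$ there; and symmetrically for $\psi+w$. For the borderline case $c\leq0$ allowed in the statement, Lemma~\ref{screened} does not literally apply, so one should instead appeal to the standard weak maximum principle for second-order elliptic operators with nonpositive zeroth-order coefficient (Chapter~6 of \cite{evans}, or Gilbarg and Trudinger, Theorem~3.1), or approximate $c$ by $c-\varepsilon<0$ and pass to the limit $\varepsilon\downarrow0$. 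The regularity hypothesis $u,\hat u\in\mathcal{C}^2(\Omega)\cap\mathcal{C}^0(\overline{\Omega})$ makes the classical form of the principle available, so no genuine weak-solution theory is needed despite the wording of the statement.
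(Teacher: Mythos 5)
Your proof is correct. The paper's own argument is essentially a one\mbox{-}line citation: it forms the difference $\Delta u = \hat u - u$, notes that it satisfies $\mathcal{L}(\Delta u)=\Delta f$ in $\Omega$ with $\Delta u = \Delta g$ on $\partial\Omega$, and then invokes the apriori bound of Theorem~3.7 of \cite{Gilbarg_15_01} without reproducing it. You perform the identical reduction but then supply the proof of that apriori bound: the barrier $\psi = G + \bigl(e^{\alpha d}-e^{\alpha x_1}\bigr)F$ with $\alpha=\beta+1$, the slab normalization $0\le x_1\le d$, and the inequality $(\alpha^2 a_{11}+\alpha b_1)e^{\alpha x_1}\ge \alpha(\alpha-\beta)\theta\,e^{\alpha x_1}\ge\theta$ are exactly the ingredients of Gilbarg--Trudinger's proof, so in substance the two routes coincide; yours simply makes the source of the constant $e^{(\beta+1)\,\mathrm{diam}(\Omega)}-1$ visible rather than leaving it inside a citation. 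Your closing caveat is also well placed and worth recording: the paper's Lemma~\ref{screened} is stated only for $c<0$, so to justify the comparison step in the borderline case $c\le 0$ one must either use the weak maximum principle in the form of Theorem~3.1 of \cite{Gilbarg_15_01} (or the corresponding statement in \cite{evans}) or run the $c-\varepsilon$ approximation you describe --- the paper glosses over this by citing the external theorem wholesale. You are likewise right that, despite the statement's wording, the hypothesis $u,\hat u\in\mathcal{C}^2(\Omega)\cap\mathcal{C}^0(\overline{\Omega})$ makes this a classical (not weak-solution) argument, and that ${\rm diag}(\Omega)$ must be read as the diameter; these are defects of the statement, not of your proof.
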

\begin{proof}
Since $u$ satisfies $\mathcal Lu = f$ on $\Omega$ and $u = g$ on $\partial\Omega$, $\Delta u = \hat{u} - u$ satisfies $\mathcal{L}(\Delta u) = \Delta f$ on $\Omega$ and $\Delta u = \Delta g$ on $\partial\Omega$.
One can apply the bound in Theorem 3.7 of~\cite{Gilbarg_15_01}. 
\end{proof}

Theorem \ref{boundary} shows that solutions to uniformly elliptic PDEs with $c\leq 0$ are not sensitive to perturbations in the Dirichlet boundary data and Theorem~\ref{weakrighthand} shows the sensitivity of the solution to perturbations in the right--hand side does not depend on the aspect ratio of the domain $\Omega$.
However, this bound is extremely weak.  We provide a better bound for the screened Poisson equation with Theorem~\ref{righthand}.

\begin{theorem}
\label{righthand}
Let $\mathcal{L} u = u_{xx} + u_{yy} - k^2 u$, and let $u$ be the solution to $\mathcal{L} u = f$ on domain $\Omega$ with boundary $\partial \Omega$, and $u|_{\partial \Omega} = g$.  Let $r$ be the radius of the smallest circle completely containing $\Omega$.  If $s$ is the solution to $\mathcal{L} s = f + \epsilon$ with boundary conditions $s|_{\partial \Omega} = g$, then $\max |s - u| \leq \frac{\max |\epsilon| r^2}{4}$.
\end{theorem}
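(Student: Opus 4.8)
The plan is to dominate the difference $\delta := s - u$ by an explicit quadratic barrier and then invoke the maximum principles of Lemmas~\ref{poisson} and~\ref{screened}. By linearity of $\mathcal{L}$, subtracting the two boundary value problems gives $\mathcal{L}\delta = \epsilon$ on $\Omega$ with $\delta = 0$ on $\partial\Omega$. Write $M = \sup_{\Omega}|\epsilon|$, and, after translating coordinates, assume the smallest circle containing $\Omega$ is centered at the origin, so that $x^2 + y^2 \le r^2$ for every $(x,y)\in\overline{\Omega}$.

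First I would introduce the comparison function $w(x,y) = \tfrac{M}{4}\bigl(r^2 - x^2 - y^2\bigr)$, which is nonnegative on $\overline{\Omega}$ and satisfies $w_{xx} + w_{yy} = -M$, hence $\mathcal{L}w = -M - k^2 w$. Next I would examine the two sign combinations $w \pm \delta$: since $w \ge 0$ on $\overline\Omega$ and $\pm\epsilon \le |\epsilon| \le M$, we get
\[
\mathcal{L}(w \pm \delta) = \bigl(-M - k^2 w\bigr) \pm \epsilon \le -M + M = 0 \quad \text{on } \Omega,
\]
while on $\partial\Omega$ we have $w \pm \delta = w \ge 0$.

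Then I would apply the maximum principle. Because the zeroth-order coefficient is $c = -k^2 \le 0$, Lemma~\ref{poisson} (in the degenerate case $k = 0$) or Lemma~\ref{screened} (when $k \ne 0$) shows that $w \pm \delta$ cannot attain a negative minimum in the interior of $\Omega$; combined with $w \pm \delta \ge 0$ on $\partial\Omega$, this forces $w \pm \delta \ge 0$ throughout $\Omega$. Hence $|\delta| \le w \le \tfrac{M}{4}r^2$ on $\Omega$, which is exactly the claimed bound $\max|s-u| \le \tfrac{\max|\epsilon|\,r^2}{4}$.

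The only real subtlety is arranging the barrier so that $\mathcal{L}(w+\delta)$ and $\mathcal{L}(w-\delta)$ are both nonpositive at once: this works precisely because $c \le 0$ makes the term $-k^2 w$ have a favorable sign, and because $w$ is scaled by $M = \max|\epsilon|$ rather than by $\epsilon$ itself. A minor bookkeeping point is that Lemma~\ref{screened} is stated for strictly negative $c$, so the case $k = 0$ must be routed through Lemma~\ref{poisson} separately; everything else is a routine computation.
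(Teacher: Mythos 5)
Your proof is correct, and it reaches the bound by a cleaner route than the paper's. Both arguments ultimately rest on the same quadratic barrier $\tfrac{M}{4}\bigl(r^2 - |x - x_0|^2\bigr)$ built on the smallest disk containing $\Omega$, but the paper gets there in several stages: it splits the perturbation as $\epsilon = \epsilon_+ + \epsilon_-$, solves a screened problem for each piece, uses Lemma~\ref{screened} to pin down the sign of each resulting $\delta_\pm$, then uses that sign information together with Lemma~\ref{poisson} to dominate each $\delta_\pm$ by a solution of the \emph{unscreened} Poisson equation, and only then invokes the barrier. You collapse all of this into a single application of the minimum principle to $w \pm \delta$, observing that the term $-k^2 w$ has a favorable sign precisely because $w \ge 0$ and $c = -k^2 \le 0$, so no decomposition of $\epsilon$ and no intermediate comparison with the Poisson equation are needed. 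Your version is shorter, avoids the paper's slightly delicate two-step comparison, and your explicit routing of the $k = 0$ case through Lemma~\ref{poisson} (since Lemma~\ref{screened} assumes $c < 0$ strictly) is a point the paper glosses over. One cosmetic remark: in the displayed inequality the intermediate expression $-M + M$ silently drops the $-k^2 w \le 0$ term; the conclusion $\mathcal{L}(w \pm \delta) \le 0$ is still valid, but writing $\le -k^2 w \le 0$ would be tidier.
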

\begin{proof}
First, consider the smallest disk $\omega$ with radius $r$ that completely contains region $\Omega$.  Let this disk have center $(a, b)$.
For $(x, y) \in \omega$ and $c < 0$, the solution $t = c\parens{(x - a)^2 + (y - b)^2 - r^2}$ is nonnegative, $\max t = -r^2 c$, and $t_{xx} + t_{yy} = 4c$.

Next, consider solution $w$ such that $w_{xx} + w_{yy} = 4c$ for $c < 0$ and $w = 0$ on $\partial \Omega$.
The solution $w - t$ satisfies $(w - t)_{xx} + (w - t)_{yy} = 0$ and $w - t \leq 0$ on $\partial \Omega$, since $t \geq 0$ for all $(x, y) \in \Omega$.
By Lemma~\ref{poisson}, $(w - t) \leq 0$ for all $(x, y) \in \Omega$.
Therefore, $\max w \leq -r^2 c$.

Finally, we will consider the solution $v$ such that $v_{xx} + v_{yy} = \epsilon$ with $v = 0$ on $\partial \Omega$ and $\max |\epsilon| = c$.  By Lemma~\ref{poisson}, $v$ is bounded above by $w_+$ satisfying $\nabla^2 w_+ = -c$ and $w_+ = 0$ on $\partial \Omega$, and $v$ is bounded below by $w_-$ satisfying $\nabla^2 w_- = c$ and $w_- = 0$ on $\partial \Omega$.
We can write $\epsilon = \epsilon_+ + \epsilon_-$, where $\epsilon_+ = \max(\epsilon, 0)$, and $\epsilon_- = \min(\epsilon, 0)$.

Let $\delta_+$ be the solution to $\mathcal{L} \delta_+ = \epsilon_-$ with zero Dirichlet boundary conditions and let $\delta_-$ be the solution to $\mathcal{L} \delta_- = \epsilon_+$ with the same boundary conditions.  By Lemma~\ref{screened}, $\delta_+ \geq 0$ on $\Omega$, and $\delta_- \leq 0$.

We can write that $\nabla^2 \delta_+ = k^2 \delta_+ + \epsilon_-$.  Since $\delta_+ \geq 0$, $\min(k^2 \delta_+ + \epsilon_-) \geq \min(\epsilon_-)$, so $\delta_+$ is bounded above by the solution of $\nabla^2 v = \epsilon_-$ with zero Dirichlet boundary conditions by Lemma~\ref{poisson}.  Similarly, $\delta_-$ is bounded below by the solution of $\nabla^2 v = \epsilon_+$ with zero Dirichlet boundary conditions.  Therefore, $s = u + \delta_+ + \delta_-$.
Thus, $\max |s - u| = \max(\max \delta_+, \ \max -\delta_-) \leq \frac{\max|\epsilon|r^2}{4}$.
\end{proof}

Theorem \ref{boundary} proves that any change to the boundary conditions of the screened Poisson equation will not be amplified in the solution on any domain.
Theorem \ref{weakrighthand} gives a weak bound for solving all second--order uniformly elliptic PDEs with $c \leq 0$, and Theorem \ref{righthand} shows that the amplitude of a perturbation in the right hand side of the screened Poisson equation is bounded by a function of the radius of the domain.  Therefore, solving the screened Poisson equation on any domain is a well-posed problem.

It seems unfortunate to have an unstable numerical algorithm for a well-conditioned problem. Therefore, we ask ourselves the question: Can we create a numerically stable algorithm for solving PDEs on meshes with skinny elements?
\subsection{Stability and conditioning}

Our method represents any linear differential operator as a matrix operator.  One method of assessing the numerical stability of the method is to examine the condition number of the matrix.
In order to demonstrate that the condition number is bounded, let us define a skinny triangle with coordinates $(0, 0), (1, 1 + \epsilon), (2, 2 - \epsilon)$.  We can now define a skinny quadrilateral with vertices $(0, 0), (0.5, 0.5 + 0.5\epsilon), (1, 1), (1, 1 - 0.5\epsilon)$.
When we construct a differential operator matrix on that skinny quadrilateral, we still need a preconditioner.  In fact, we found that row scaling so that each row has a supremum norm of 1 is fairly close to optimal.
Figure \ref{condition number table} shows the condition number $\kappa$ of the normalized matrix for solving Poisson's equation as $\epsilon$ approaches zero.
Even as the quadrilateral becomes very skinny, the condition number of the matrix is bounded from above.
The process of creating the matrix is shown in sections~\ref{sec:Ultraspherical}~and~\ref{sec:Determinant}.

\begin{table}
\centering
\begin{tabular}{c | c | c | c | c | c | c | c}
$\epsilon$ & $1$ & $10^{-1}$ & $10^{-2}$ & $10^{-3}$ & $10^{-6}$ & $10^{-9}$ & $10^{-12}$\\ \hline
$\kappa \ (\times 10^4)$ & 0.1597 & 0.3161 & 0.9632 & 1.0763 & 1.0832 & 1.0832 & 1.0832
\end{tabular}
\label{condition number table}
\caption{The condition number of our matrix for solving Poisson's equation on a skinny quadrilateral with vertices $(0, 0), (0.5, 0.5 + 0.5\epsilon), (1, 1), (1, 1 - 0.5\epsilon)$.  As $\epsilon$ approaches 0, the condition number is bounded from above.}
\end{table}

Figures~\ref{fig:OneQuadError} and~\ref{fig:ManyQuadError} show that our method gives extremely accurate results even on meshes with skinny triangles.  Figure~\ref{fig:PoissonSkinnyQuad} shows that our method performs equally well on meshes with or without skinny elements.

\begin{figure}
\label{fig:OneQuadError}
\centering
\includegraphics[width = 0.75\textwidth]{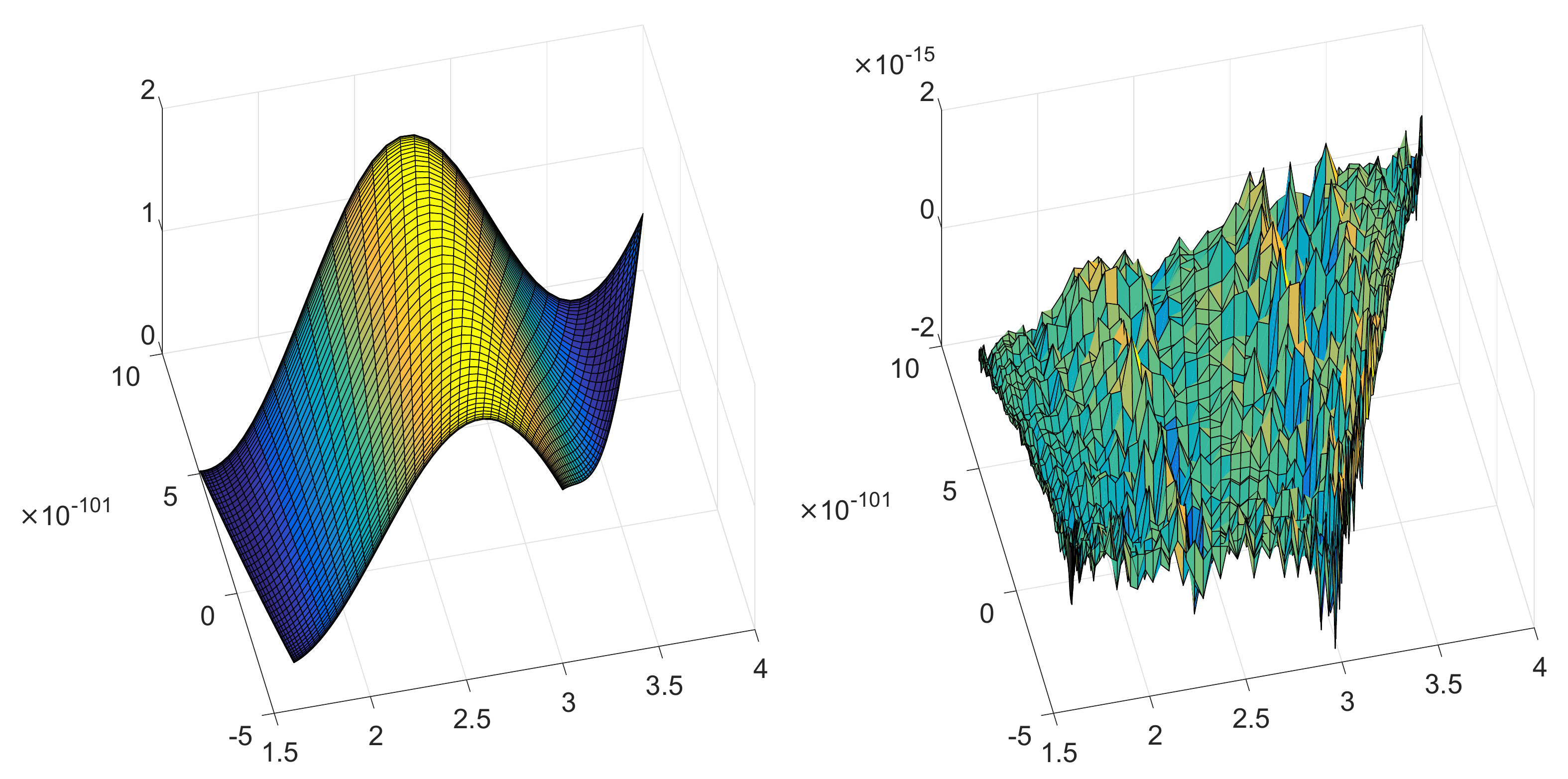}
\caption{The left graph shows a solution to Poisson's equation on quadrilateral $10^{100}$ times as long as it is wide.  The right graph shows the error between the computed solution and the exact solution.  Even on skinny quadrilaterals, our method is still numerically stable.
Our only limitation on the skinniness of mesh elements is roundoff error in actually defining the element.}
\end{figure}

\begin{figure}
    \centering
    \begin{tabular}{p{0.5 \textwidth} p{0.5\textwidth}}
        \includegraphics[width = 0.48 \textwidth]{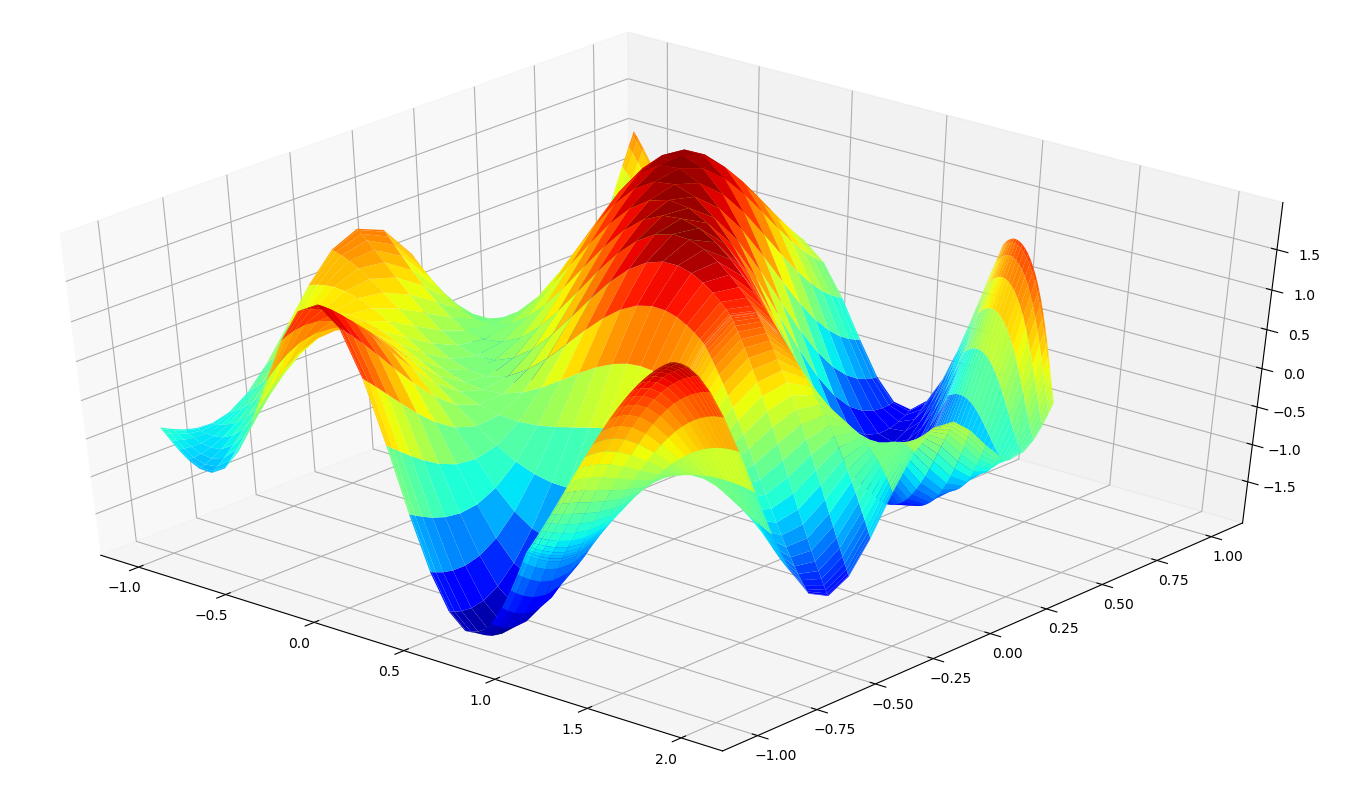} & \includegraphics[width = 0.48 \textwidth]{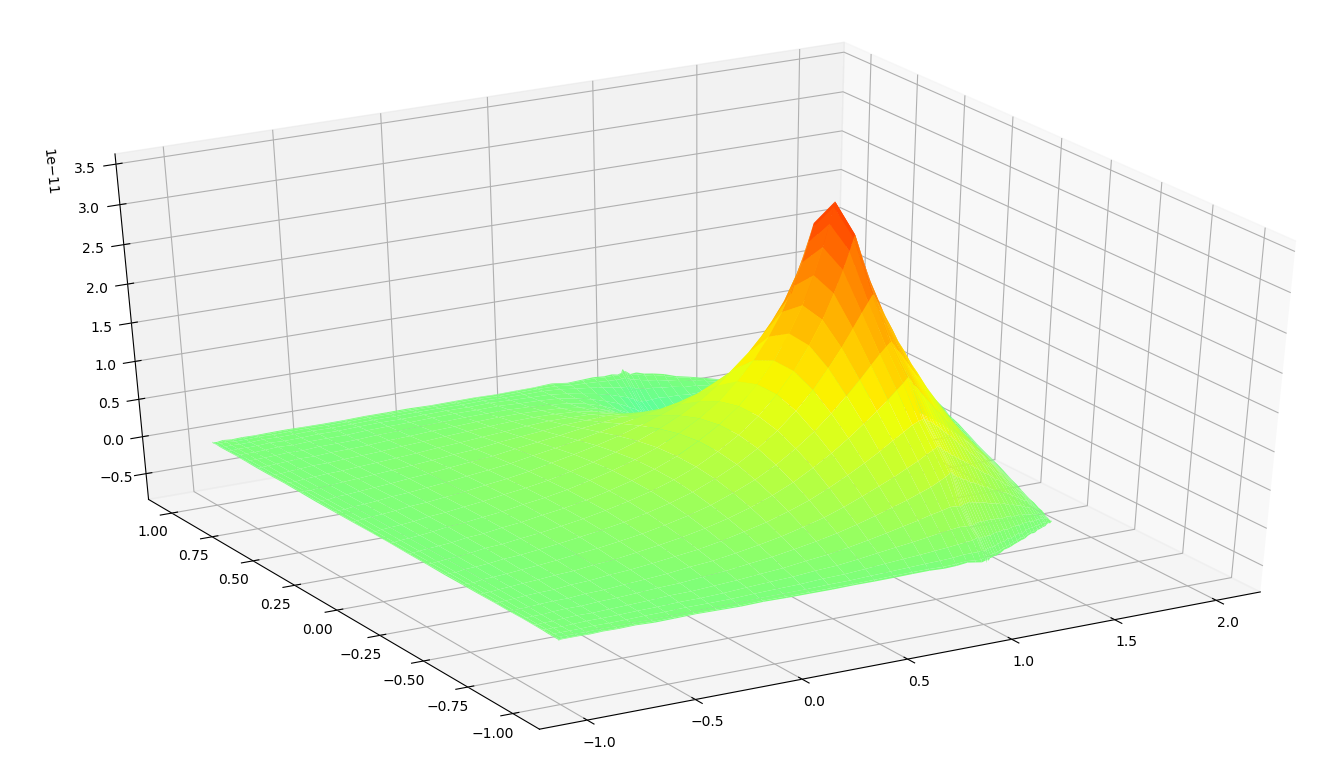} \\
        {\em A solution to Poisson's equation on two elements with low aspect ratio} & {\em Error between the exact solution and the computed solution $(< 10^{-10})$} \\
        \includegraphics[width = 0.48 \textwidth]{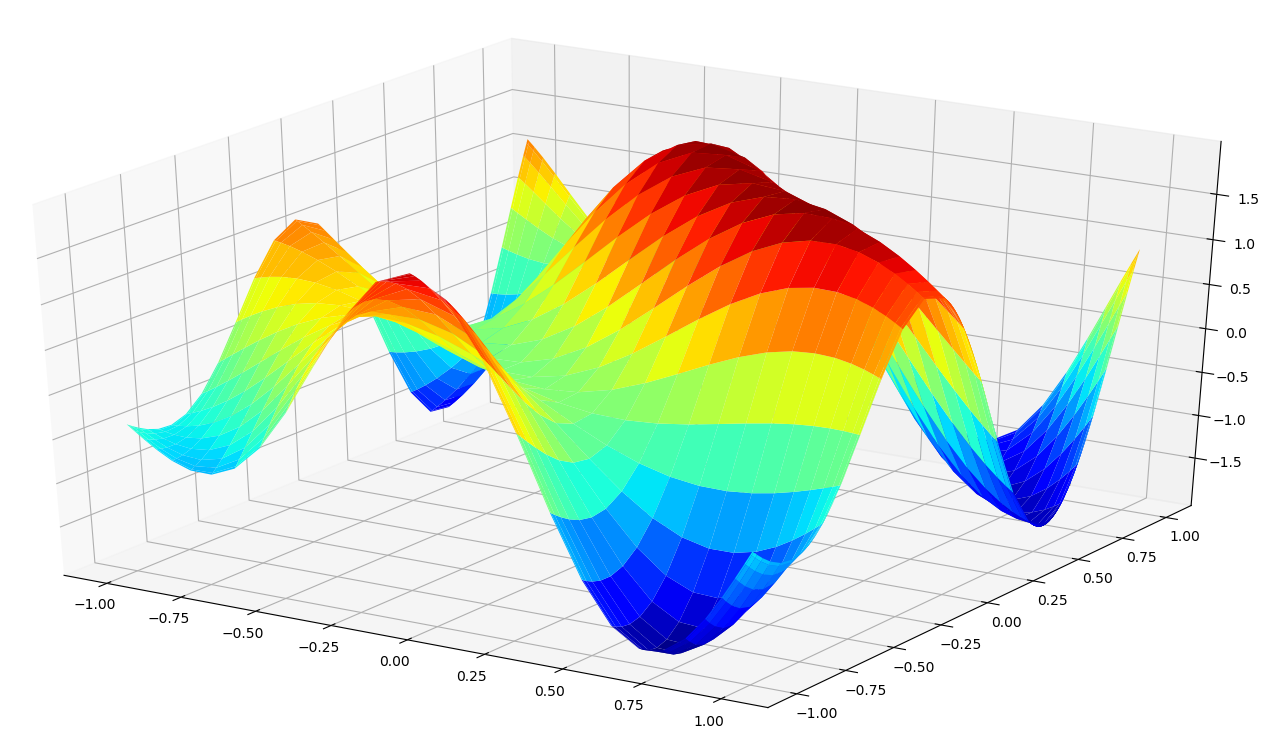} & \includegraphics[width = 0.48 \textwidth]{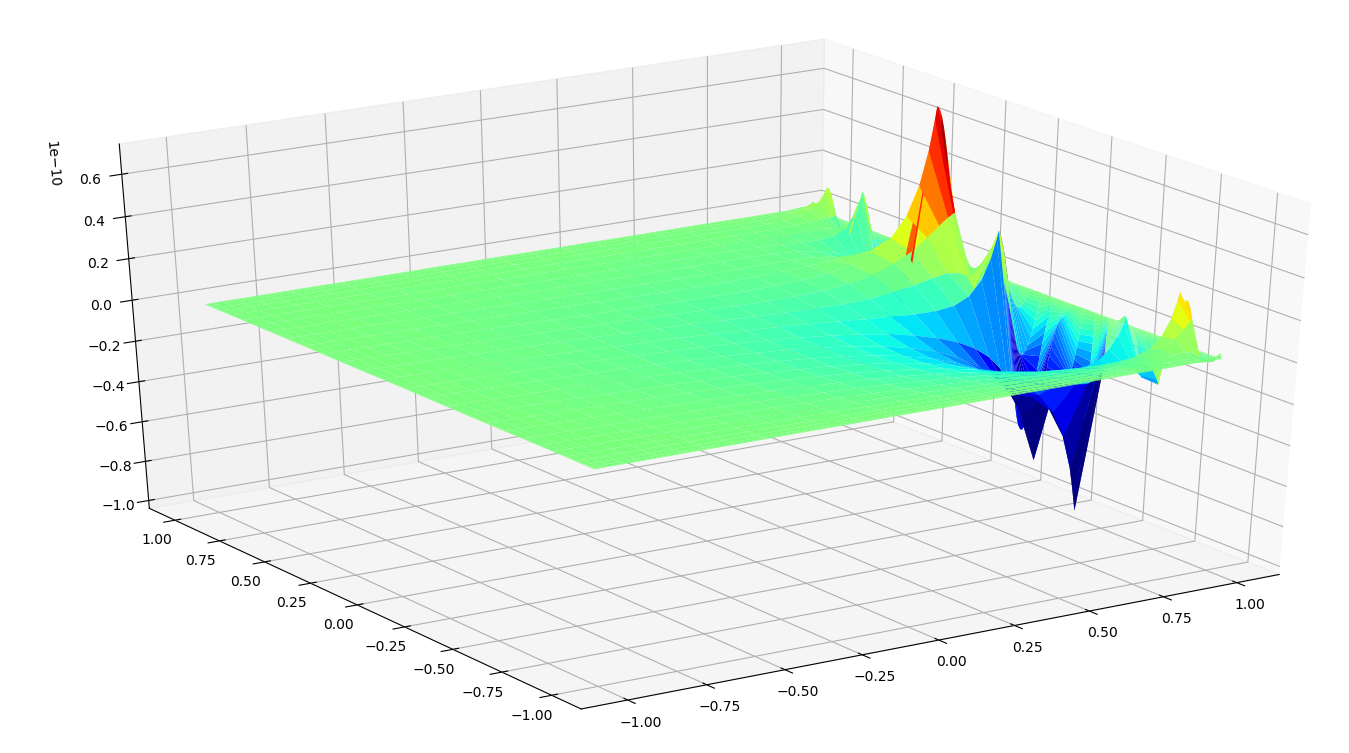} \\
        {\em A solution to Poisson's equation on an element with high aspect ratio attached to an element with low aspect ratio} & {\em Error between the exact solution and the computed solution $(< 10^{-10})$}
    \end{tabular}
    \caption{We can solve Poisson's equation equally accurately on a mesh with and without skinny elements.  Each mesh is the union of the square $[-1, 1] \times [-1, 1]$ and the quadrilateral with vertices $(1, 1), (1, -1), (-0.8, \epsilon), (0.4, 2\epsilon)$.  The top graph shows a solution with $\epsilon = 0.5$ and the bottom shows a solution with $\epsilon = 10^{-6}$.  In both cases, the error is on the order of $10^{-11}$.}
    \label{fig:PoissonSkinnyQuad}
\end{figure}

\begin{figure}
\label{fig:ManyQuadError}
\centering
\includegraphics[width = 0.75\textwidth]{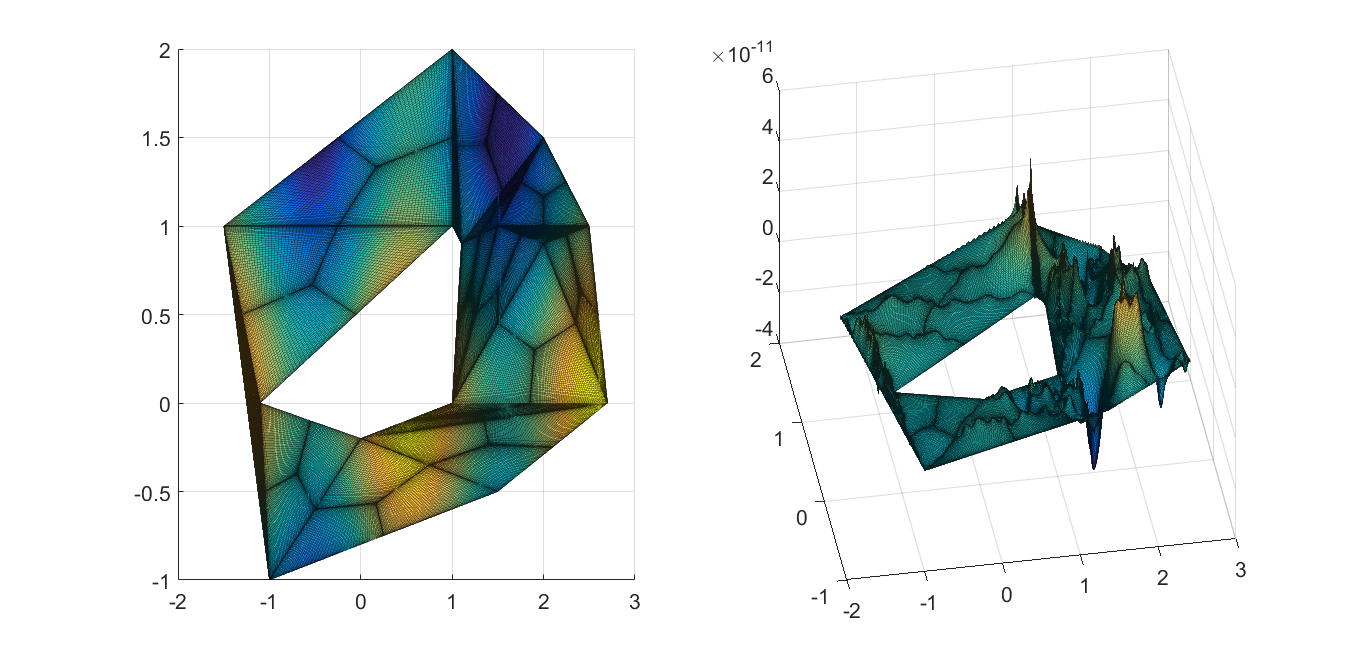}
\caption{Using the Schur complement method, quadrilateral domains can be stitched together to solve differential equations on meshes. Our method is numerically stable even on a mesh containing both skinny and fat triangles.}
\end{figure}

\section{Spectral methods}\label{sec:Ultraspherical}
In order to develop a useful spectral method, one must choose the proper set of basis functions.
Although wavelets and complex exponentials are tempting possibilities, certain sets of polynomials
promise simplicity and easy computability. Although polynomial interpolation is
numerically unstable using equispaced points, it is numerically stable using Chebyshev
points, with the $n$th set of Chebyshev points $x_j = \cos \parens{\frac{n-1-j}{(n-1)\pi}}$
~\cite{trefethen}.

Chebyshev polynomials are an efficient and numerically stable basis for
polynomial approximation~\cite{trefethen}.  We define the $n$th degree Chebyshev
polynomial as $T_n(x) = \cos(n\arccos(x))$, for $x \in [-1, 1]$~\cite[(18.3.1)]{NIST:DLMF}.

\subsection{The ultraspherical spectral method}
Spectral methods are especially useful for solving linear differential equations.
The goal of using spectral methods is to approximate an infinitely-dimensional
linear differential operator with a finite-dimensional sparse matrix operator.
We can represent a function as an infinite vector of Chebyshev coefficients,
and we can truncate that vector at an appropriate size to achieve a sufficiently
accurate approximation of a smooth function.
If we attempted to create a differential operator $D_\lambda$ that mapped a
function in the Chebyshev basis to its $\lambda^{th}$ derivative in the Chebyshev basis,
then that operator would be dense, as most of its matrix elements would be nonzero.
Current spectral methods map Chebyshev polynomials to Chebyshev polynomials,
giving dense, ill-conditioned differentiation matrices.
In contrast, our method constructs $D_\lambda$ to map a function in Chebyshev coefficients
to its $\lambda$th derivative in the ultraspherical basis of parameter $\lambda$.
For every positive real $\lambda$, ultraspherical polynomials of parameter $\lambda$
are orthogonal on $[-1, 1]$ with respect to the weight function $(1 - x^2)^{\lambda - \frac{1}{2}}$
\cite[Eq.~18.3.1]{NIST:DLMF}.  Here, we will ultraspherical polynomials with positive integer parameter.

Using ultraspherical coefficients of parameter $\lambda$, the matrix $\lambda$th
differentiation matrix $D_\lambda$ is
\begin{equation}\label{dmatrix}
D_\lambda = 2^{\lambda-1}(\lambda-1)!
\begin{pmatrix}
\overbrace{0 \quad \cdots \quad 0}^{\lambda \text{ times}} & \lambda\\
& & \lambda + 1\\
& & & \lambda + 2\\
& & & & \ddots
\end{pmatrix}.
\end{equation}
These operators are diagonal, but combining different-order
derivative coefficients leads to a problem:  different-order $D_\lambda$ transform functions
in the Chebyshev basis to ultraspherical bases of different parameters.
The solution is to create basis conversion operators, denoted as $S_\lambda$,
which convert between ultraspherical bases of different orders.
Specifically, $S_0$ converts functions from the Chebyshev basis to the
ultraspherical basis of parameter 1, and $S_\lambda$ converts functions from the
ultraspherical basis of parameter $\lambda$ to the ultraspherical basis of parameter $\lambda + 1$.
For example, we can represent the differential operator $\partialf{u}{x} + 2x$ as
the matrix operator $(D_1 + 2S_0)u$.  The ultraspherical conversion operators take the form
\begin{equation}\label{cmatrix}
S_0 = \frac{1}{2}
\begin{pmatrix}
2 & 0 & -1\\
& 1 & 0 & -1\\
& & 1 & 0 & \ddots\\
& & & 1 & \ddots\\
& & & & \ddots
\end{pmatrix}
, \quad
S_\lambda =
\begin{pmatrix}
1 & 0 & -\frac{\lambda}{\lambda + 2}\\
& \frac{\lambda}{\lambda + 1} & 0 & -\frac{\lambda}{\lambda + 3}\\
& & \frac{\lambda}{\lambda + 2} & 0 & \ddots\\
& & & \frac{\lambda}{\lambda + 3} & \ddots\\
& & & & \ddots
\end{pmatrix}
, \quad \lambda > 0.
\end{equation}
Since $D_\lambda$ and $S_\lambda$ are sparse for all valid $\lambda$,
the final differential operator will be sparse and banded.
Let $L$ be some $n$-order differential operator such that
$L = c_0 + c_1D_1 + c_2D_2 + \ldots + c_nD_n$ where $c_0, \ldots, c_n$ are constants
and $D_n$ is equivalent to $\frac{d^n}{dx^n}$.
If we wish to solve the differential equation $Lu = f$, we create the
matrix operator $\mathbf{L}$.  We also create vectors $\vec{u}$ and $\vec{f}$
as vectors of Chebyshev coefficients approximating $u(x)$ and $f(x)$.
Now, we can write
$\mathbf L = c_0S_{n-1}\cdots S_0 + c_1S_{n-1}\cdots S_1D_1 + \cdots + c_{n-1}D_{n-1} + c_nD_n$,
and we can write the matrix equation $\mathbf L\vec{u} = S_{n-1}\cdots S_0 \vec{f}$.

This method is not limited to equations with constant coefficients.
It is possible to define multiplication matrices of the form $M_\lambda[f]$, where $M_\lambda[f]$ will multiply a vector in the ultraspherical basis of parameter $\lambda$ by function $f$ \cite{townsend_computing_2014}.
The bandwidth of multiplication matrices increases with the degree of $f$, so high speed computation requires $f$ with low polynomial degree.

In order to find a particular solution to the differential equation $Lu = f$, we need boundary conditions.
An $n$th order differential equation requires $n$ distinct boundary conditions.
Since the highest ordered coefficients of $f$ are often very close to zero,
we can simply replace the last $n$ rows of $\mathbf L$ and the last $n$ elements of
$\vec{f}$ with boundary condition rows.
To set $u(x) = a$, we create the boundary condition row of $[T_0(x), T_1(x), T_2(x), \ldots, T_{n-1}(x)]$,
and the corresponding value of $\vec{f}$ is set to $a$.
If we use the boundary condition $u'(x) = a$, we create the boundary condition row
of $[T'_0(x), T'_1(x), T'_2(x), \ldots, T'_{n-1}(x)]$, and the corresponding value of $\vec{f}$ is set to $a$.
We now have a sparse, well-conditioned differential operator,
assuming that the problem is well-posed~\cite{townsend_automatic_2015}.

\subsection{Discrete differential operators in two dimensions}
In two dimensions, we can write any polynomial $p$ as its bivariate Chebyshev expansion
$$\displaystyle p(x, y) = \sum_{i, j = 0}^n a_{ij} T_i(y)T_j(x), \qquad x, y \in [-1, 1].$$
To form a matrix operator, we can stack the columns of the coefficient matrix to form the vector
\begin{equation}
u = [a_{00} , \ a_{10} , \ a_{20} , \ldots , a_{n-1,0} , \ a_{01} , \ a_{11} , \ldots , a_{n-1,1} , \ldots , a_{n-1,n-1}]^T.
\end{equation}
We use Kronecker products (denoted as `$\otimes$') in order to construct differential operators in two dimensions.
For example $\partialf{u}{x}$ is represented as $D_1 \otimes I$ and $\partialf{u}{y}$ is represented as $I \otimes D_1$.
To put together differential operators, we can use the identity
$(A \otimes B) (C \otimes D) = AC \otimes BD$,
so to represent an operator like $\partialf{^3}{x \partial y^2}$, we use $D_1 \otimes D_2$.
We can also add Kronecker products to construct more complicated operators; for instance,
\begin{equation}
\partialf{^2}{x^2} - \partialf{^2}{x \partial y} + 2\partialf{}{x} \quad \Longleftrightarrow \quad
D_2 \otimes S_1S_0 - S_1D_1 \otimes S_1D_1 + 2S_1D_1 \otimes S_1S_0.
\end{equation}
A derivation of this two-dimensional sparse operator construction can be found in~\cite{townsend_automatic_2015}.

Boundary conditions in two dimensions are slightly more complicated than in one dimension.
Consider solving partial differential equations on a square with resolution $n \times n$.
We now remove rows corresponding to a right-hand side term of degree $n-2$ or $n-1$.
These $4n-4$ rows are replaced with boundary conditions corresponding to the $4n-4$ boundary points.

If we want to set Dirichlet conditions at the point $(a, b)$, we create the boundary row
\begin{equation}
  [T_0(a), T_1(a), \ldots, T_{n-1}(a)] \otimes [T_0(b), T_1(b), \ldots, T_{n-1}(b)].
  \label{u row}
\end{equation}

In order to apply Neumann conditions, $S_0^{-1}D_1u$ will give the first derivative of Chebyshev vector $u$ in Chebyshev coefficients.  To find the value the derivative of $u$ at point $a$, we use $[T_0(a), T_1(a), \ldots, T_{n-1}(a)]S_0^{-1}D_1u$.
Thus, a boundary condition row representing $\frac{\partial u}{\partial x}$ at point $(a, b)$ would be
\begin{equation}
  [T_0(a), T_1(a), \ldots, T_{n-1}(a)]S_0^{-1}D_1 \otimes [T_0(b), T_1(b), \ldots, T_{n-1}(b)]
  \label{u_r row}
\end{equation}
and a boundary condition row representing $\frac{\partial u}{\partial y}$ at point $(a, b)$ would be
\begin{equation}
  [T_0(a), T_1(a), \ldots, T_{n-1}(a)] \otimes [T_0(b), T_1(b), \ldots, T_{n-1}(b)]S_0^{-1}D_1.
  \label{u_s row}
\end{equation}

\section{Spectral methods on convex quadrilaterals}\label{sec:Determinant}
The method of constructing matrices described above works only on square domains. We now adapt existing spectral methods to solve differential equations on quadrilaterals, and eventually, quadrilateral meshes.

We use the following bilinear map from the square $[-1, 1]^2$ to the quadrilateral with vertices $(x_1, y_1)\ldots (x_4, y_4)$ in counterclockwise order.
\begin{equation}
x = a_1 + b_1r + c_1s + d_1rs, \qquad
y = a_2 + b_2r + c_2s + d_2rs,
\label{bilinear}
\end{equation}
where $(x, y)$ refers to coordinates on the quadrilateral and $(r, s)$ refers to coordinates on the square.
\begin{equation}
\begin{aligned}
a_1 &= \frac{1}{4}\parens{x_1 + x_2 + x_3 + x_4},\qquad &b_1 = \frac{1}{4}\parens{x_1 - x_2 - x_3 + x_4},\\
c_1 &= \frac{1}{4}\parens{x_1 + x_2 - x_3 - x_4},       &d_1 = \frac{1}{4}\parens{x_1 - x_2 + x_3 - x_4},
\end{aligned}
\label{coordinates to coefficients}
\end{equation}
and $a_2$, $b_2$, $c_2$, and $d_2$ are similarly defined with $y_1$, $y_2$, $y_3$, and $y_4$.
The transformation from the quadrilateral to the square is more complicated, but it will not be needed.

When the quadrilateral domain is mapped onto the square, the differential equations on the quadrilateral are also distorted.  Through the use of the chain rule, we obtain
\begin{equation}
\begin{aligned}
u_x &= u_r r_x + u_s s_x,\\
u_y &= u_r r_y + u_s s_y,\\
u_{xx} &= u_{rr}(r_x^2) + 2u_{rs}(r_x s_x) + u_{ss}(s_x^2) + u_r r_{xx} + u_s s_{xx},\\
u_{xy} &= u_{rr}(r_x r_y) + u_{rs}(r_x s_y + s_x r_y) + u_{ss}(s_x s_y) + u_rr_{xy} + u_s s_{xy},\\
u_{yy} &= u_{rr}(r_y^2) + 2u_{rs}(r_y s_y) + u_{ss}(s_y^2) + u_r r_{yy} + u_s s_{yy}.
\end{aligned}
\label{derivatives}
\end{equation}
We still need to evaluate derivatives $r_x$, $r_y$, $s_x$, $s_y$, $r_{xx}$, $r_{xy}$, $r_{yy}$, $s_{xx}$, $s_{xy}$, and $s_{yy}$.
Using the Inverse Function Theorem, we can invert the Jacobian of the transformation to obtain
\begin{equation}
\begin{aligned}
\begin{pmatrix}
r_x & r_y\\
s_x & s_y
\end{pmatrix}
=
\begin{pmatrix}
x_r & x_s\\
y_r & y_s
\end{pmatrix}^{-1}
=
\frac{1}{\det(x, y)}
\begin{pmatrix}
y_s & -x_s\\
-y_r & x_r
\end{pmatrix},
\qquad \det(x, y) = x_r y_s - x_s y_r.
\end{aligned}
\label{jacobian}
\end{equation}
Using these equations, we can also find second derivatives $r_{xx}$, $r_{xy}$, $r_{yy}$, $s_{xx}$, $s_{xy}$, and $s_{yy}$.  These derivatives can be represented as a bivariate cubic polynomial divided by $\det(r, s)^3$.

In order to keep our differential operators sparse, we simply multiply the equations and right-hand side by $\det(r, s)^3$.
That way, we only multiply the operator by multiplication matrices with polynomial degree $< 3$, rather than by a dense approximation of a rational function, and our differential operators remain sparse and banded.

\section{Interface conditions}\label{sec:Interface}
In order to actually do something useful with quadrilateral domains, it is necessary to somehow stitch them together into a larger mesh.
Here, we will first consider the case of stitching two quadrilaterals together along an edge.
We found that the most efficient way to stitch together multiple domains was to use the Schur Complement Method \cite{Mathew}.
Essentially, we first solve for the values along the interface between the two domains, and then we use that interface to find the bulk solution inside of each domain.  The advantage to using the Schur Complement Method is that once we have solved for the interface solution, we can compute the solution on each element in parallel.

In order to use the Schur Complement method on two quadrilaterals, we must have a linear system of the form
\begin{equation}
\begin{bmatrix}
A_{11} & 0 & A_{1\Gamma}\\
0 & A_{22} &  A_{2\Gamma}\\
A_{\Gamma 1} & A_{\Gamma 2} & A_{\Gamma \Gamma}
\end{bmatrix}\!\!
\begin{bmatrix}
u_1 \\ u_2 \\ u_\Gamma
\end{bmatrix}
=
\begin{bmatrix}
f_1 \\ f_2 \\ f_\Gamma
\end{bmatrix}.
\end{equation}

Here, $u_1$ is the solution on the first quadrilateral, $u_2$ is the solution on the second quadrilateral, and $u_\Gamma$ is the solution on the interface between $u_1$ and $u_2$.
We will represent $u_\Gamma$ as a vector of values at Chebyshev points.

We also have $A_{11}$ and $A_{22}$ for solving partial differential equations on quadrilateral elements.
Finally, $A_{1\Gamma}$ and $A_{2\Gamma}$ constrain $u_\Gamma$ to $u_1$ and $u_2$, and $A_{\Gamma 1}$ and $A_{\Gamma 2}$ ensure that the solution across the interface is differentiable.  $A_{\Gamma \Gamma}$ is zero, as it is not necessary here.

First, we need to set $A_{1\Gamma}$ and $A_{2\Gamma}$.
As shown in Figure \ref{nodes}, if an edge of $u_1$ is connected to $u_\Gamma$, then we connect all of the nodes on that edge of $u_1$ to $u_\Gamma$ except for the node on the counterclockwise corner of that edge.
This arrangement ensures that all nodes on the exterior boundary of the domain are assigned the proper boundary conditions.
It also avoids redundant boundary conditions when multiple quadrilaterals meet at a point on the interior of the mesh.
\begin{figure}
  \label{nodes}
  \centering
  \includegraphics[width=0.6\textwidth]{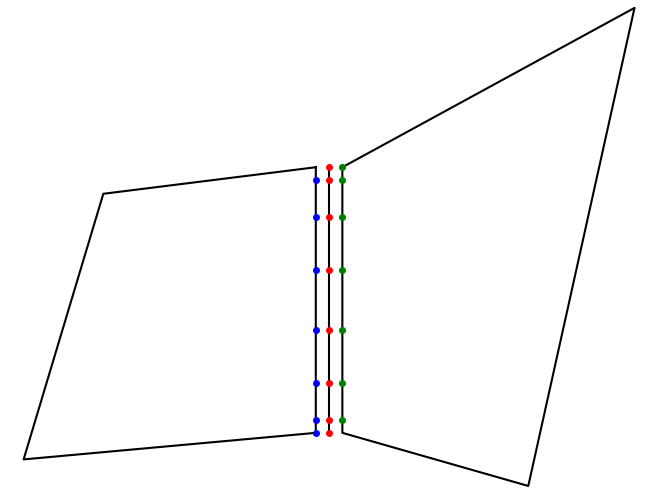}
  \caption{Let the left quadrilateral be $u_1$, the right quadrilateral be $u_2$, and the middle line be $u_\Gamma$.
  Only $n-1$ points on the rightmost edge of $u_1$ are connected to $u_\Gamma$, since the top right corner is excluded.
  Similarly, the bottom left corner is excluded from $u_2$
  While this arrangement may seem unnecessarily complicated on the surface, it facilitates stitching together large numbers of quadrilaterals.}
\end{figure}

Now, we need to ensure that the solution is differentiable across the interface with $A_{\Gamma 1}$ and $A_{\Gamma 2}$.
In order to do so, we need to find the derivative perpendicular to the boundary.
Let us say that the boundary has endpoints $(x_1, y_1)$ and $(x_2, y_2)$.
Let us define
\begin{equation}
  \alpha = \frac{\Delta x}{\sqrt{\Delta x^2 + \Delta y^2}}, \qquad \beta = \frac{\Delta y}{\sqrt{\Delta x^2 + \Delta y^2}}.
\end{equation}

Let $z = \beta x - \alpha y$, so the derivative normal to the boundary is $u_z = \beta u_x - \alpha u_y$. We want this derivative in terms of $r$ and $s$. Using~\eqref{derivatives}, we have
\begin{equation}
  u_x = u_r r_x + u_s s_x, \qquad u_y = u_r r_y + u_s s_y.
\end{equation}
From~\eqref{jacobian}, we have
\begin{equation}
  u_x = \frac{1}{\det(r, s)}\parens{u_r y_s - u_s y_r}, \qquad u_y = \frac{1}{\det(r, s)}\parens{-u_r x_s + u_s x_r}.
\end{equation}
By differentiating~\eqref{bilinear}, we have
\begin{equation}
  \begin{aligned}
    u_x &= \frac{1}{\det(r, s)}\parens{u_r (c_2 + d_2 r) - u_s (b_2 + d_2 s)}\\
    u_y &= \frac{1}{\det(r, s)}\parens{-u_r (c_1 + d_1 r) + u_s (b_1 + d_1 s)}.
  \end{aligned}
\end{equation}
Finally, we can generate boundary rows for $u_r$ and $u_s$ with~\eqref{u_r row} and~\eqref{u_s row}.
Each row of $A_{\Gamma 1}$ represents the derivative of $u_1$ perpendicular to the interface at one of the $n$ Chebyshev points on the interface,
and each row of $A_{\Gamma 2}$ represents the negative derivative of $u_2$ at each Chebyshev point on the boundary.

Now, we want to ensure continuity across the boundary at the two endpoints. We can simply replace the differentiability conditions corresponding to the endpoints of the interface with continuity conditions from~\eqref{u row}.

\section{Constructing differential operators on meshes}\label{sec:ElementMethod}
The key to designing a mesh solver is strict bookkeeping.
Every quadrilateral in the mesh is given a unique number, as is every vertex and every edge.
We can define our mesh with a list of coordinates of vertices, and a list of the vertices contained in each triangle.
It is important that all of the vertices are numbered in counterclockwise order.
Next, each edge must be given a unique global number.
Edges also have local definitions---an edge can be expressed as two vertices, or as
a quadrilateral number and a local edge number on the quadrilateral.
We generate arrays to convert between these global and local definitions.
Finally, we must construct an array that keeps track of whether vertices are on the
inside or boundary of the domain.  Each vertex is also assigned a single edge
adjacent to that vertex.  This ``vertex list'' is essential for constructing non-singular
boundary conditions.

Next, we define $u_\Gamma$ such that elements $n(k-1)+1 \!:\! nk$ of $u_\Gamma$ correspond to the $k$th interior edge. Thus, columns $n(k-1)+1 \!:\! nk$ of $A_{i\Gamma}$ and rows $n(k-1)+1 \!:\! nk$ of $A{\Gamma i}$ correspond to interface conditions across the $k$th interior edge.

We now generate a matrix $A_{i}$ for each quadrilateral.
If quadrilateral $i$ and $j$ share an edge with interior edge number $k$, then columns $n(k-1)+1 \!:\! nk$ of $A_{i\Gamma}$
and columns $n(k-1)+1 \!:\! nk$ of $A_{j\Gamma}$ are set like in the previous section.
Next, rows $n(k-1)+1 \!:\! nk$ of $A{\Gamma i}$ and rows $n(k-1)+1 \!:\! nk$ of $A{\Gamma j}$ are set to ensure differentiability, with the endpoints set to continuity conditions.
Finally, we check the two endpoints of the edge on the vertex list.
If an endpoint is listed as an interior vertex and the vertex list marks edge $k$, then the continuity conditions are removed for that endpoint and replaced with differentiability conditions across the edge at that endpoint.
That way, we make sure that the solution is fully continuous and almost fully differentiable without singularities in the linear system.

\section{Optimization}\label{sec:Optimization}
When solving systems of equations, banded matrices are typically efficient to solve.
Our matrices for finding solutions on quadrilaterals are ``almost banded,'' meaning that most of the elements lie close to the main diagonal, but a few rows extend the whole length of the matrix.
These dense rows correspond to boundary conditions, as defined in Section \ref{sec:Ultraspherical}.
Therefore, sparse LU decomposition and Gaussian elimination have large backfill---they are forced to set many zero matrix elements to nonzero values, resulting in a much more computationally intensive solve.
We found that we can use the Woodbury matrix identity to efficiently replace the rows lying outside of the bandwidth,
creating a banded matrix that can easily be stored as an LU decomposition.
The Woodbury matrix identity can compute the inverse a matrix given the inverse of another matrix and a rank-$k$ correction.  It satisfies
\begin{equation}
(A + UCV)^{-1} = A^{-1} - A^{-1}U(C^{-1} + VA^{-1}U)^{-1}VA^{-1},
\end{equation}
where $A$ is $n$-by-$n$, $U$ is $n$-by-$k$, $V$ is $k$-by-$n$, and $C$ is $k$-by-$k$~\cite{higham}.

In our case, we regard our matrix as $A + UCV$, where $A$ is a matrix that is fast to solve and $UCV$ is a low--rank correction.
We generate $V$ by stacking every row of $A + UCV$ that corresponds to a boundary condition, since these rows are the only dense rows.
We define $U$ as a sparse binary matrix with exactly one `1' in each column and no more than one `1' in each row, and $C$ is simply the identity matrix.  The replacement boundary condition rows are 1's in columns corresponding to the lowest-order coefficients of the solution.  Our matrix $A$ is now sparse and banded, so fast sparse LU decomposition is possible.  Therefore, once the LU decomposition is completed, back-substitution can be evaluated for each individual right-hand side extremely rapidly, allowing a system of variables with over 60,000 degrees of freedom to be solved in under a second.

Another place for optimization is the matrix $$\Sigma = A_{\Gamma \Gamma}
 - \sum_j A_{\Gamma j} A_{jj}^{-1} A_{j \Gamma}.$$
The matrix $\Sigma$ is typically quite large, so it is important to be efficiently solvable.
Consider $A_{\Gamma j} A_{jj}^{-1} A_{j \Gamma}$.
This matrix has nonzero columns only in columns of $A_{j \Gamma}$
that contain nonzero values, and it has nonzero rows only in rows of $A_{\Gamma j}$
that contain nonzero values.
Fortunately, most rows of $A_{\Gamma j}$ and columns of $A_{j \Gamma}$ are zero.
In order to optimize the bandwidth of $\Sigma$, we must optimize the placement of the nonzero elements of $A_{\Gamma j}$ and $A_{j \Gamma}$.

In general, if there are $m$ different boundaries between quadrilaterals, and $n$ is the discretization size,
then $u_\Gamma$ is composed of $m$ blocks of $n$ elements each, with each block
corresponding to a boundary between two quadrilaterals.  Each boundary is assigned a number, such that
elements $n(a-1)+1 \! :\!  na$ of $u_\Gamma$ correspond to boundary $a$.
If boundary $a$ is contained by quadrilateral $j$, then $A_{\Gamma j}$ has nonzero rows
$n(a-1)+1 \! : \! na$ and $A_{j \Gamma}$ has nonzero columns $n(a-1)+1 \! : \! na$.
Thus, if two quadrilaterals contain boundaries $a$ and $b$, then $\Sigma$ has a bandwidth of
at least $(|a-b| + 1)n$.  Therefore, in order to minimize the bandwidth of $\Sigma$, we
minimize the maximum $|a - b|$ for any two boundaries contained by the same quadrilateral.
The theoretical minimum is $\OO(\sqrt N)$, where $N$ is the number of elements, and
standard techniques can efficiently get pretty close to this minimum.  Therefore,
computing the LU decomposition of $\Sigma$ has a time complexity of $\OO(N^2n^3)$
and a space requirement of $\OO(N^{1.5}n^2)$.  The time complexity of back-substitution
is $\OO(N^{1.5}n^2)$.

Using the Woodbury identity for all of the matrices $A_{jj}$ and computing $A_{jj}^{-1} A_{j \Gamma}$
has a time complexity of $\OO(N n^4)$.  Back substitution for all $A_{jj}$ has a time complexity of $\OO(N n^3)$, and the storage space for the matrices is also $\OO(N n^3)$.
In order to solve partial differential equations on a mesh, there is an $\OO(N^2 n^3 + N n^4)$ precomputation time, and for each right hand side, the solve time is $\OO(N^{1.5} n^2 + N n^3)$.
It may be possible to use an iterative method to multiply a vector by $\Sigma^{-1}$, reducing the factors of $\OO(N^2)$
and $\OO(N^{1.5})$ to $\OO(N)$.

\section{The Navier--Stokes equations}\label{sec:NavierStokes}
\begin{figure}
\label{vortices}
\centering
\begin{tabular}{c c}
\includegraphics[width = 0.45\textwidth]{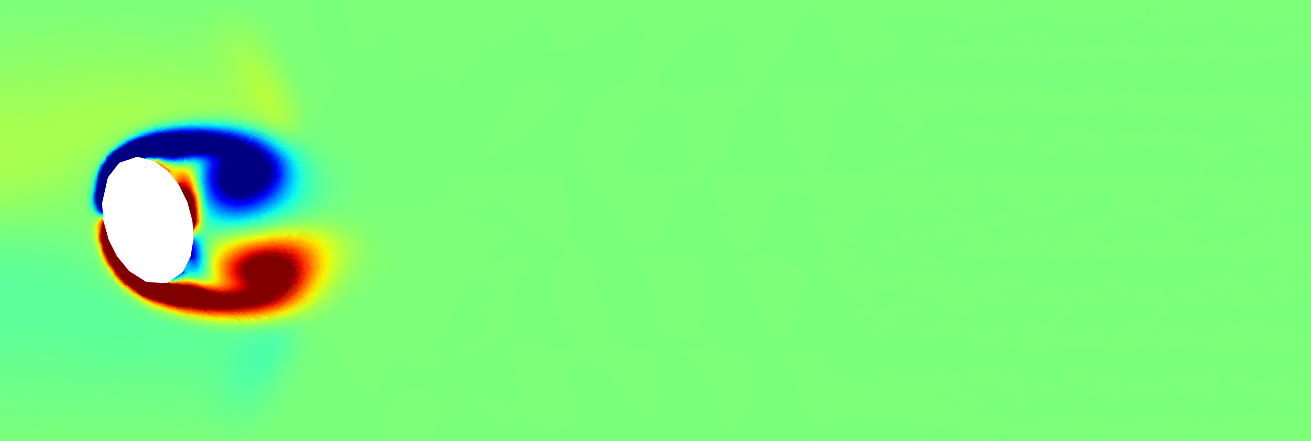} &
\includegraphics[width = 0.45\textwidth]{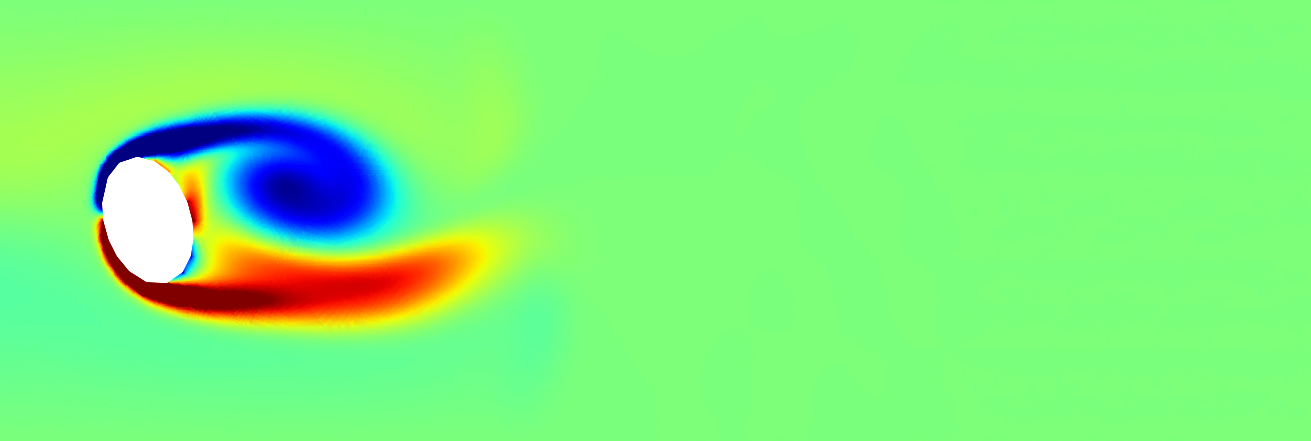} \\
Frame 500 ($t = 0.00833$~s) & Frame 1000 ($t = 0.01667$~s)\\ \\
\includegraphics[width = 0.45\textwidth]{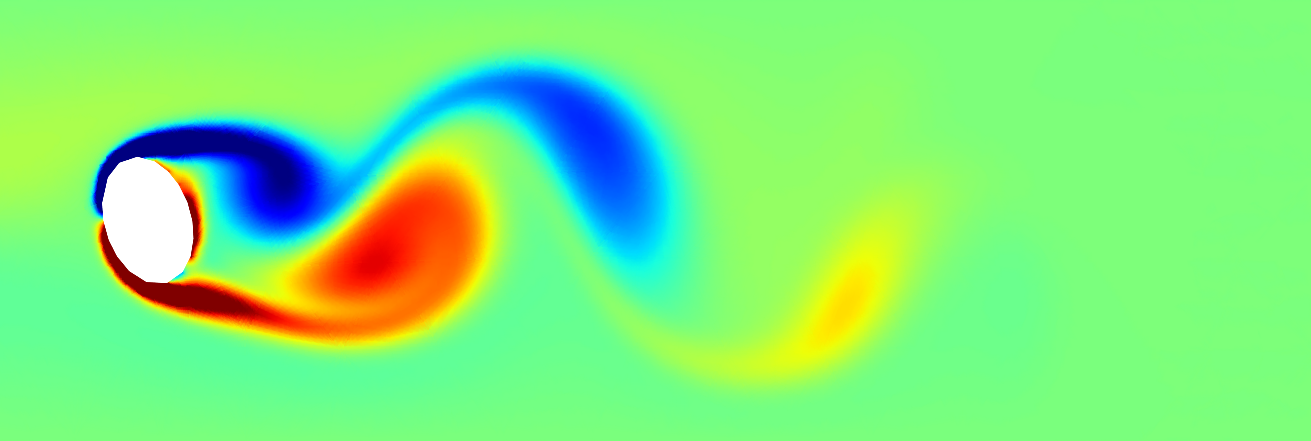} &
\includegraphics[width = 0.45\textwidth]{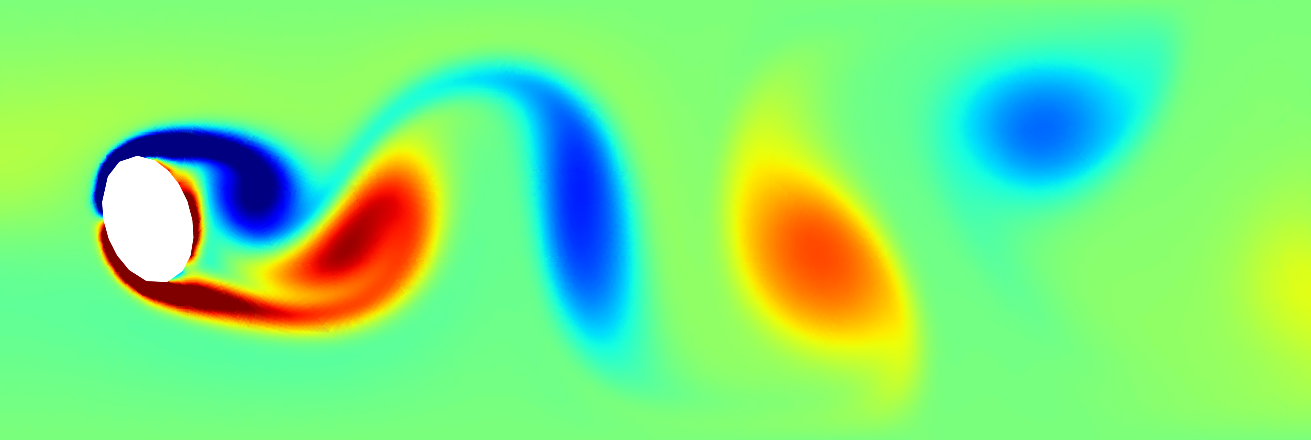} \\
Frame 2000 ($t = 0.03333$~s) & Frame 3000 ($t = 0.0500$~s)\\ \\
\includegraphics[width = 0.45\textwidth]{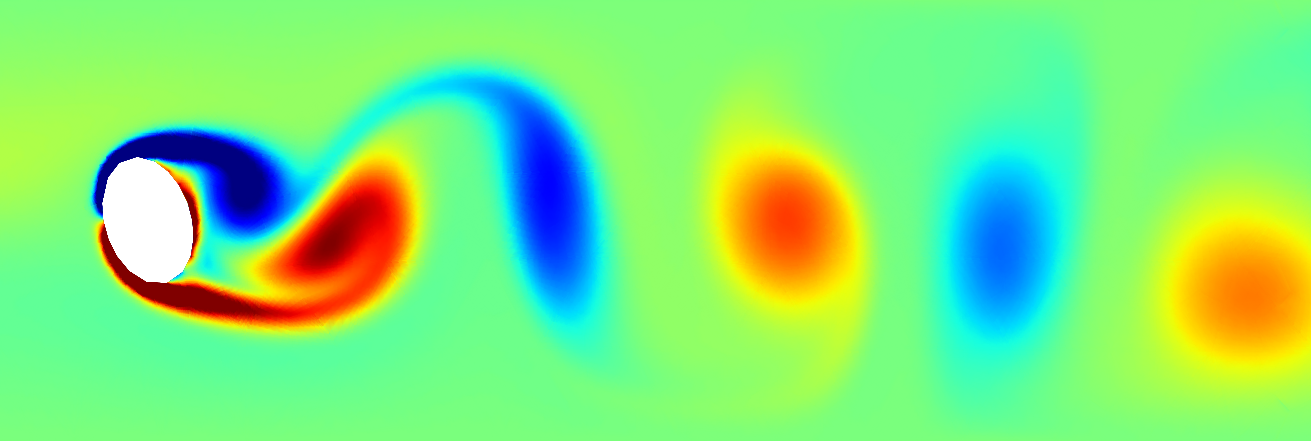} &
\includegraphics[width = 0.45\textwidth]{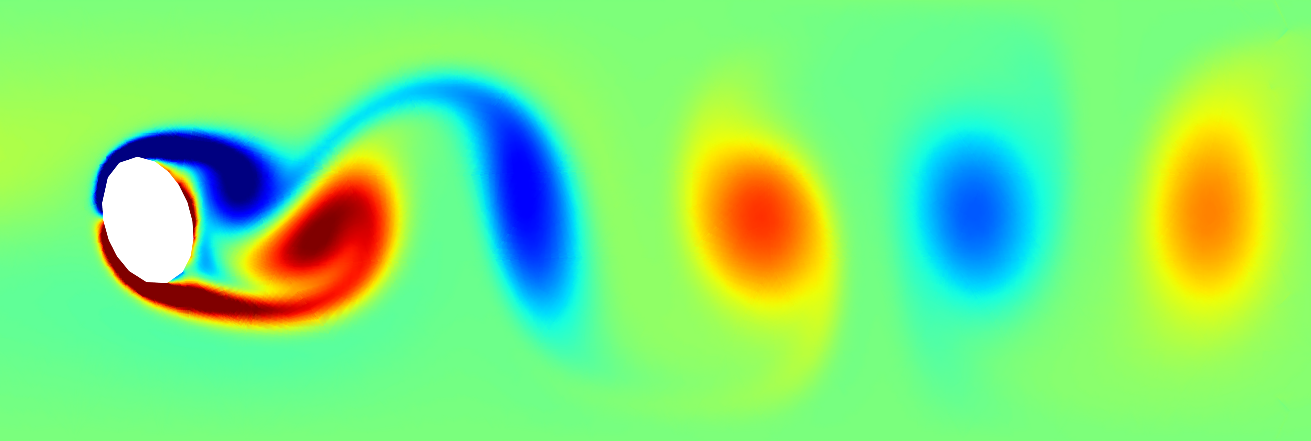} \\
Frame 4000 ($t = 0.06667$~s) & Frame 5000 ($t = 0.08333$~s)\\ \\
\includegraphics[width = 0.45\textwidth]{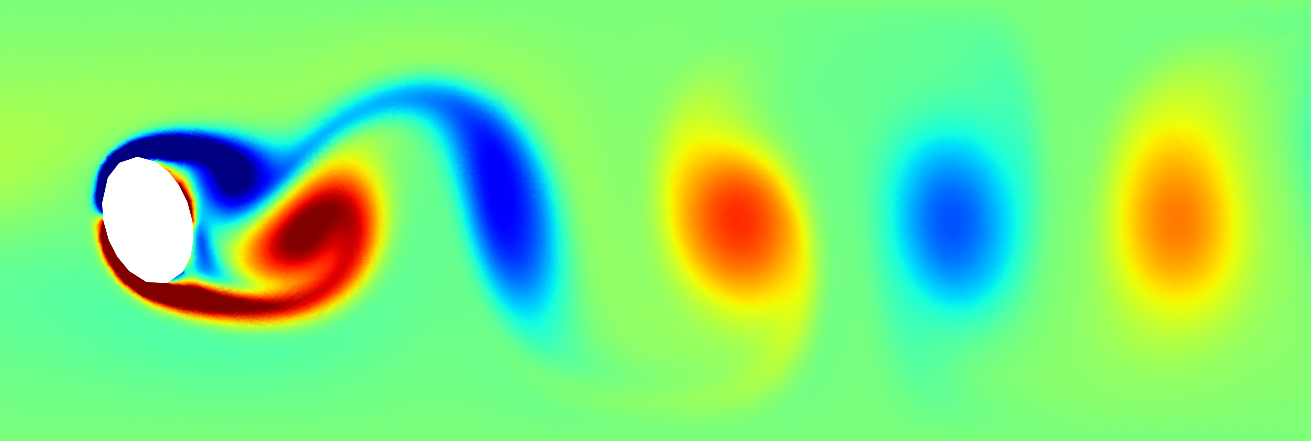} &
\includegraphics[width = 0.45\textwidth]{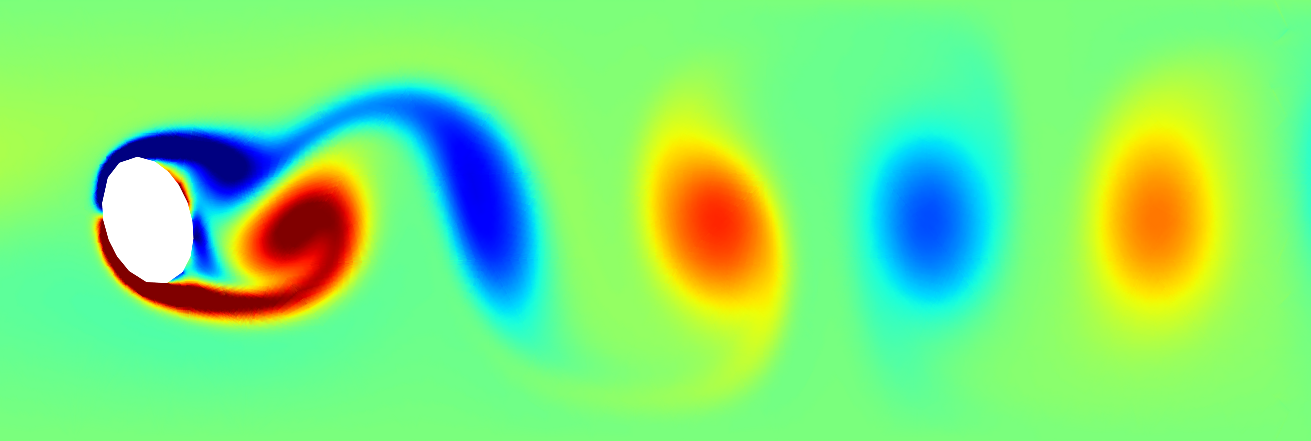} \\
Frame 6000 ($t = 0.10000$~s) & Frame 7000 ($t = 0.11667$~s)
\end{tabular}
\caption{A vortex street develops around a tilted elliptical object in fluid flow.
Red regions have positive vorticity (counterclockwise) and blue regions have negative vorticity.
The simulation is equivalent to air flow through a 1~cm~$\times$~3~cm rectangle at $0.6$~m/s.
Each time step is $1.667 \times 10^{-5}$~s.}
\end{figure}

We decided to demonstrate our method with the direct numerical simulation of the Navier--Stokes equations in two dimensions at moderately high Reynolds numbers.
For incompressible flows, the Navier--Stokes equation are
\begin{equation}
\partialf{\mathbf u}{t} + (\mathbf u \cdot \nabla) \mathbf u + \nabla p = \nabla^2 u, \qquad \nabla \cdot \mathbf u = 0,
\end{equation}
where $\mathbf u$ is the velocity vector field and $p$ is the internal pressure field.
Note that these equations set density and viscosity to 1, so the Reynolds number can be varied solely by changing the velocity of the fluid or the dimensions of the domain.
Since these equations are coupled and nonlinear, we use the multistep method described in \cite{nseqs}.  We can use a first-order projection method to linearize and decouple the Navier--Stokes equations.  With no-slip boundary conditions, we have
\begin{equation}
\begin{aligned}
&\nabla^2 \mathbf u^{n+1/2} - \frac{\mathbf u^{n+1/2}}{\triangle t} = (\mathbf u^n \cdot \nabla) \mathbf u^n - \frac{\mathbf u^n}{\triangle t}, &\qquad &\mathbf u^{n+1/2} = 0, \quad \text{on } \partial \Omega\\
&\nabla^2 p^{n+1} = \frac{\mathbf u^{n+1/2}}{\triangle t}, &\qquad &\partialf{p^{n+1}}{\mathbf n} = 0, \quad \text{on } \partial \Omega\\
&\mathbf u^{n+1} = \mathbf u^{n+1/2} - \triangle t \nabla p^{n+1}, & &
\end{aligned}
\end{equation}
where $\mathbf n$ is the normal vector to the boundary~\cite{nseqs}.

Navier--Stokes simulations are extremely useful in the form of wind tunnel simulations, where a test object is placed in a steady flow and analyzed.
For an incompressible flow, different boundary conditions are applied at the inlet, outlet, walls, and test object.
Typically, the test object will have no-slip boundary conditions, and the walls will have free-slip boundary conditions.  The inlet has a constant fluid velocity and zero pressure gradient, and the outlet has a constant pressure and zero velocity gradient.

Figure~\ref{vortices} shows a wind tunnel simulation of an object in a moving fluid.  A tilted ellipse was chosen to generate asymmetries in the flow, leading to vortex shedding earlier in the simulation.  

\section*{Acknowledgments}\label{sec:Acknowledgment}
We thank Pavel Etingof, Slava Gerovitch, and Tanya Khovanova for their work on the MIT PRIMES program, which allowed us to collaborate together.  In March 2017, the first author was awarded the second prize at the Regeneron Science Talent search worth \$175,000 based on this work that will cover the tuition fees at MIT starting in August 2017. We are sincerely humbled by Regeneron's very generous support. We thank Grady Wright for giving us advice on the implementation of the Navier--Stokes simulation.  We also thank Dan Fortunato and Heather Wilber.  This work was supported by National Science Foundation grant No.~1645445.

\appendix
\section{Determinants}\label{dets}
Let $(x_1, y_1)$, $(x_2, y_2)$, and $(x_3, y_3)$ be the vertices of an arbitrary triangle in counterclockwise order.
By the shoelace formula, the area of the triangle is
\begin{equation}
A = \frac{1}{2}\Big[\big(x_1y_2 + x_2y_3 + x_3y_1\big) - \big(x_2y_1 + x_3y_2 + x_1y_3\big)\Big].
\label{shoelace}
\end{equation}
We can construct three quadrilaterals from the triangle by partitioning the triangle along the line segments connecting the centroid of the triangle to the midpoints of the sides.  Without loss of generality, let our quadrilateral contain vertex 1 of the triangle, so it has the following vertices in counterclockwise order:
\begin{equation}
\big(x_1, y_1\big), \quad \bigg(\frac{x_1 + x_2}{2}, \frac{y_1 + y_2}{2}\bigg), \quad \bigg(\frac{x_1 + x_2 + x_3}{3}, \frac{y_1 + y_2 + y_3}{3}\bigg), \quad \bigg(\frac{x_1 + x_3}{2}, \frac{y_1 + y_3}{2}\bigg).
\label{quadrilateral coordinates}
\end{equation}
Using equations \eqref{coordinates to coefficients} and \eqref{quadrilateral coordinates}, we have
\begin{equation}
x = \frac{1}{24}\Big[\big(14 x_1 + 5 x_2 + 5 x_3\big) + \big(4 x_1 - 5 x_2 + x_3\big)r + \big(4 x_1 + x_2 - 5 x_3\big)s + \big(2 x_1 - x_2 - x_3\big)rs \Big],
\end{equation}
and similarly for $y$.
We can use equations \eqref{bilinear} and \eqref{jacobian} to find that
\begin{equation}
\det(r, s) = \big(b_1 c_2 - b_2 c_1\big) + \big(b_1 d_2 - b_2 d_1\big)r + \big(c_2 d_1 - c_1 d_2\big) s.
\end{equation}
Plugging in values for $b_1$, $c_1$, $d_1$, $b_2$, $c_2$, and $d_2$, we have
\begin{equation}
\begin{split}
\det(r, s) = \frac{1}{576}\Big[&\big( (4 x_1 - 5 x_2 + x_3) (4 y_1 + y_2 - 5 y_3) - (4 y_1 - 5 y_2 + y_3) (4 x_1 + x_2 - 5 x_3) \big)\\
 + &\big((4 x_1 - 5 x_2 + x_3) (2 y_1 - y_2 - y_3) - (4 y_1 - 5 y_2 + y_3) (2 x_1 - x_2 - x_3) \big)r\\
 + &\big((4 y_1 + y_2 - 5 y_3) (2 x_1 - x_2 - x_3) - (4 x_1 + x_2 - 5 x_3) (2 y_1 - y_2 - y_3) \big)s \Big].
\end{split}
\label{big ugly}
\end{equation}
We can note that
\begin{equation}
\Bigg(\sum_{i=1}^n v_i x_i \Bigg)\Bigg(\sum_{i=1}^n w_i y_i \Bigg) - \Bigg(\sum_{i=1}^n w_i x_i \Bigg) \Bigg(\sum_{i=1}^n v_i y_i \Bigg) = \sum_{1 \leq i < j \leq n} (x_i y_j - x_j y_i)(v_i w_j - v_j w_i),
\label{summation}
\end{equation}
and use this identity to simplify \eqref{big ugly} to obtain
\begin{equation}
\det(r, s) = \frac{1}{96}\Big[\big(x_1y_2 + x_2y_3 + x_3y_1\big) - \big(x_2y_1 + x_3y_2 + x_1y_3\big)\Big]\big(4 + r + s\big).
\end{equation}
Substituting in \eqref{shoelace} and letting $A$ be the area of the triangle gives the desired result of
\begin{equation}
\det(r, s) = \frac{A\big(4 + r + s\big)}{48}.
\end{equation}

\end{document}